\newcommand{\mc}[1]{\mathcal{#1}}
\newcommand{\LO}{{\Lambda_\Omega}}
\newcommand{\tLO}{\tilde{\Lambda}_\Omega}
\newcommand{\WL}{{\mathcal{W}_\Lambda}}
\newcommand{\res}{
	\,\raisebox{-.127ex}{\reflectbox{\rotatebox[origin=br]{-90}{$\lnot$}}}\,
} 
\newcommand{\sm}{\setminus}
\newcommand{\lgl}{\langle}
\newcommand{\rgl}{\rangle}
\newcommand{\supp}{\operatorname{supp}}
\newcommand{\pa}{\partial}
\newcommand{\con}{\subset} 
\newcommand{\R}{\mathbb R}
\newcommand{\N}{\mathbb N}
\newcommand{\V}{\mathbb V}
\newcommand{\W}{\mathbb W}
\newcommand{\bF}{\mathbf{F}}
\newcommand{\bM}{\mathbf{M}}
\newcommand{\bv}{\mathbf{v}}
\newcommand{\tA}{\tilde{A}}
\newcommand{\tB}{\tilde{B}}
\newcommand{\tC}{\tilde{C}}
\newcommand{\tH}{\tilde{H}}
\newcommand{\tS}{\tilde{\Si}}
\newcommand{\HH}{\mathcal H}
\newcommand{\LL}{\mathcal L}
\newcommand{\RR}{\mathcal R}
\newcommand{\VV}{\mathcal V}
\newcommand{\WW}{\mathcal{W}}
\newcommand{\ga}{\gamma}
\newcommand{\be}{\beta}
\newcommand{\al}{\alpha}
\newcommand{\de}{\delta}
\newcommand{\ep}{\epsilon}
\newcommand{\la}{\lambda}
\newcommand{\ro}{\rho}
\newcommand{\si}{\sigma}
\newcommand{\te}{\theta}
\newcommand{\De}{\Delta}
\newcommand{\Ga}{\Gamma}
\newcommand{\La}{\Lambda}
\newcommand{\Si}{\Sigma}
\newcommand{\Te}{\Theta}
\newcommand{\Om}{\Omega}
\newcommand{\vp}{\varphi}
\newcommand{\Nau}{\mathcal{N}}
\theoremstyle{plain}
\newtheorem{thm}{Theorem}[section] 
\newtheorem*{thm*}{Theorem A}
\newtheorem*{thm**}{Theorem B}
\theoremstyle{plain}
\theoremstyle{plain}
\newtheorem{prop}[thm]{Proposition}
\theoremstyle{plain}
\newtheorem{lemma}[thm]{Lemma}
\theoremstyle{plain}
\newtheorem{cor}[thm]{Corollary}
\theoremstyle{definition}
\newtheorem{defn}[thm]{Definition} 
\theoremstyle{definition}
\newtheorem{remark}[thm]{Remark}
\theoremstyle{definition}
\newtheorem{example}[thm]{Example}
\begin{document}

	\begin{center}
		\LARGE\textbf{Confined Willmore energy and the Area functional}
	\end{center}
	\vspace{-3mm}
	\begin{center}
	By \scshape Marco Pozzetta\footnote{
		\noindent pozzetta@mail.dm.unipi.it, Dipartimento di Matematica, Universit\`{a} di Pisa, Largo Bruno Pontecorvo 5, 56127 Pisa, Italy.
		}
		\vspace{4mm}\\
		\today
	\end{center}
	\begin{minipage}[h]{\textwidth}
	\begin{abstract}
	We consider minimization problems of functionals given by the difference between the Willmore functional of a closed surface and its area, when the latter is multiplied by a positive constant weight $\Lambda$ and when the surfaces are confined in the closure of a bounded open set $\Omega\subset\mathbb{R}^3$. We explicitly solve the minimization problem in the case $\Omega=B_1$. We give a description of the value of the infima and of the convergence of minimizing sequences to integer rectifiable varifolds, depending on the parameter $\Lambda$. We also analyze some properties of these functionals and we provide some examples. Finally we prove the existence of a $C^{1,\alpha}\cap W^{2,2}$ embedded surface that is also $C^\infty$ inside $\Omega$ and such that it achieves the infimum of the problem when the weight $\Lambda$ is sufficiently small.
	\end{abstract}
\end{minipage}

\noindent\textbf{MSC Codes:}  53A05, 49Q15.

\tableofcontents


\section*{Introduction}

\addcontentsline{toc}{section}{Introduction}

If $\Si\con\R^3$ is a smooth immersed surface and $H$ is its mean curvature vector, that we define with norm equal to the absolute value of the arithmetic mean of the principal curvatures, we define the Willmore energy of $\Si$ as:
\begin{equation}
\WW(\Si):=\int_{\Si} |H|^2\,d\si
\end{equation}
where $\si$ is the area measure on $\Si$. The opertor $\WW$ is called Willmore functional. Surfaces will be usually denoted by $\Si$ and will be always compact and without boundary, but not necessarily connected.\\
The variational study of this functional has been revived in 1965 with the work of T. Willmore (\cite{Wi65} and \cite{WiRG}). He found that round spheres are the only global minimizers for $\WW$ and then he introduced the study of the minimization problem subject to constraints of topological type, such as fixing the genus of the surfaces; the celebrated Willmore Conjecture is related to these kind of problems, and it has been proved in \cite{MaNeWC}. In the last decades a number of properties about the functional itself have been proved, and the ones we will use are recalled in Section 1. The minimization problem at fixed genus has also been solved in a couple of works (\cite{SiEX} and then \cite{BaKu}), developing also a theory of which we will make use in the following.\\
In this work we are going to study the following functional:
\begin{equation}
\WL(\Si):=\WW(\Si)-\La|\Si|,
\end{equation}
where $\Si\con\R^3$ is a smooth surface, $\La>0$ is fixed and $|\Si|$ denotes the area of $\Si$. Moreover, we will always consider surfaces $\Si\con\bar{\Om}$ with $\Om\con\R^3$ open and bounded with $\pa\Om$ of class $C^2$. Also, by a rescaling property shown in Section 1, we will usually take $\Om\con B_{\frac{1}{2}}(0)$ (so that ${\rm diam}(\Om)\le1$).\\
With the above assumptions we show that the minimization problem
\begin{equation}
(P)_{\Om,\La}:\qquad\min \{ \WL(\Si):\Si\con \bar{\Om} \}
\end{equation}
sets a non trivial competition between the Willmore and the Area terms. We define
\begin{equation}
C_\La:=\inf \{ \WL(\Si):\Si\con \bar{\Om} \}.
\end{equation}
We also give here the following definitions, that will be useful later on:
\begin{equation} 
\LO:=\inf\{\La>0: C_\La =-\infty \}, \qquad
\W(\Si):=\frac{\WW(\Si)}{|\Si|},\qquad \tLO:=\inf\{ \W(\Si):\Si\con\bar{\Om} \}.
\end{equation}
Variational problems of a similar type, that is problems involving the area, have already been studied. There is a complete treatment of the minimization problem of the Willmore energy with fixed area for surfaces of genus zero (\cite{MoRi} and \cite{MuRoCS}) and with fixed isoperimetric ratio for surfaces of arbitrary fixed genus (\cite{Schy} and \cite{KeMoRi}). This kind of works found interesting comparisons with works about the shape of organic corpuscles (\cite{SeCMV}).The link with the quantities $\W$ and $\tLO$ defined above resembles the Cheeger Problem, which is actually strongly related to the existence of confined surfaces with prescribed mean curvature vector (\cite{LeCP}). It would be interesting to study related problems for curves in dimension two, for which there are already remarkable results about the variational problems of functionals depending on the curvature of the curve in the same way the Willmore energy depends on the curvature of the surface (in \cite{DMN} and \cite{DMR} confined elastic curves are considered, while in \cite{BeMu04} \cite{BeMu07} and \cite{Po20} relaxed notions of the elastic energy are studied).\\
In the next statement we sum up our main results in the case of a general domain $\Om$.
\begin{thm*}\label{thmmainresults}
	Under the above assumptions on $\Om$, denoting $C_\La:(0,+\infty)\to[-\infty,+\infty)$ the function that associates to $\La$ the infimum of $(P)_{\Om,\La}$, it holds:
	\begin{enumerate} [label={\normalfont(\roman*)}]
	\item $C_\La$ is a concave, continuous, non negative, strictly decreasing function on an interval $(0,\LO]$ for some $\LO\in[4,1/\ep_\Om^2]$ where $\LO, \ep_\Om$ depend on $\Om$. Moreover $\lim_{\La\to0^+}C_\La=4\pi$, $C_\LO\ge 0$ and $C_\La=-\infty$ for all $\La>\LO$.
	\item If $\La\in(0,\LO)$ there exists a sequence $(\Si_n^\La)$ that is minimizing for the functional $\WL$ and such that it converges in the sense of varifolds to a varifold $V$ that is integer rectifiable and has generalized mean curvature square integrable with respect to the weight measure of $V$.
	\item If $\La$ is sufficiently small, depending only on $\Om$, the limit varifold in item (ii) is actually a $C^{1,\al}\cap W^{2,2}$ embedded surface $\Si$ with multiplicity one and it is such that $\WL(\Si)=C_\La$. Moreover it holds that $\Si\cap\Om$ is of class $C^\infty$.
\end{enumerate}
\end{thm*}
\noindent Next we state the result concerning the case $\Om=B_1$, where $B_1$ is the standard unit ball of $\R^3$.
\begin{thm**}\label{thmmainresultsball}
	If $\Om=B_1$ the minimization problem $(P)_{B_1,\La}$ admits a solution if and only if $\La\le1$, in which case the minimum is $4\pi(1-\La)$. If $\La<1$ the unique minimizer is the unit sphere $S^2$. Moreover for all $\La>1$ the infimum of the problem is $-\infty$.
\end{thm**}

\noindent The paper is organized as follows. In Section 1 we state some classical properties of the Willmore functional and of the $\WL$ energy. In Section 2 we prove the first two items of Theorem A and we prove Theorem B. Section 3 is devoted to the proof of item $(iii)$ of Theorem A, that is essentially a regularity issue. In this work we adopt a very classical method, today named \emph{Simon's ambient approach} (\cite{SiEX}), that is well applicable in our setting. We will mainly highlight the differences that arise with respect to \cite{SiEX}, that is taking care of the area term and of the presence of the boundary $\pa\Om$. By now we just mention that this method is based on the direct proof of the regularity of a set contained in $\R^3$ from information about the boundedness of its second fundamental form and it has already been used in other works linked to the Willmore energy (\cite{KuSc}, \cite{Mi}, \cite{ScWBP} and \cite{Schy}). It is very remarkable a more modern method, called \emph{parametrization approach}, essentially due to Rivi\`{e}re and presented for example in \cite{RiAA}, \cite{RiLI} and \cite{RiVP}. This method is based of the formulation of suitable spaces of parametrizations of surfaces, where abstract techniques of calculus of variations are applicable. Notable applications are contained in the already cited \cite{KeMoRi} and \cite{MoRi}. We think that applying this method to our problem could give very good results and it can certainly be a future project to improve our current results following this way.\\

\noindent \emph{Acknowledgments:} I am very grateful to Matteo Novaga for his help and his interest during the preparation of this work, that is partially contained in my master thesis. I also thank Giovanni Alberti for some precious observations.

\section{Basic Properties}

We are going to collect some useful properties about the Willmore functional that we will use later on. The symbol $\VV_2 (\bar{\Om})$ denotes the set of 2-rectifiable integer varifold defined in $\R^3$ with support contained in $\bar{\Om}$. The convergence in $\VV_2 (\bar{\Om})$ is the classical convergence of varifolds in $\R^3$. The symbol $\mu_V$ will always denote the Radon measure on $\bar{\Om}$ induced by the varifold $V\in\VV_2(\bar{\Om})$. We recall that $\Om\con\R^3$ is open, bounded and with $\pa\Om$ of class $C^2$. For the general notation and results about varifolds see Appendix A. Let us start with an important observation.
\begin{remark}[Semicontinuity] \label{remsemicontinuity}
	Let us consider a sequence $V_k\in \VV_2(\bar{\Om})$ that converges to $V\in\VV_2(\bar{\Om})$ in the sense of varifolds. Assume that for each $k$ there exists the generalized mean curvature $H_k$ of $V_k$ such that
	\begin{displaymath}
	\WW(V_k):=||H_k||^2_{L^2(\bar{\Om},\mu_{V_k})}\le C_0<+\infty,
	\end{displaymath}
	with $C_0$ independent of $k$.\\
	Then applying convergence of Radon measures and using the continuity of the first variation with respect to the varifold convergence, we have that $V$ has generalized mean curvature $H_V$ such that
	\begin{displaymath}
	\liminf_k \WW(V_k)\ge \WW(V).
	\end{displaymath}
	We note also that, since $\bar{\Om}$ is compact, we have that $\bM(V_k)\to\bM(V)$, where $\bM$ denotes the mass of a varifold. Thus:
	\begin{displaymath}
	\liminf_k \WL(V_k)\ge \WL(V),
	\end{displaymath}
	\begin{displaymath}
	\liminf_k \W(V_k)\ge \W(V).
	\end{displaymath}
	For further details see \cite{ScLSC}, where it is also shown the more involved lower semicontinuity property under convergence of currents.
\end{remark}

\noindent Now we state a couple of fundamental properties of the Willmore energy.
\begin{thm}[Conformal Invariance, \cite{WiRG}]\label{thm:ConfInv}
	Let $\Si\con\R^3$ be an immersed surface in the 3-dimensional Euclidean space. Suppose $\Si\con\Om$, with $\Om\con\R^3$ open. If $F:\Om\to F(\Om)$ is a conformal transformation, then:
	\begin{displaymath}
	\WW(\Si)=\WW(F(\Si)).
	\end{displaymath}
\end{thm}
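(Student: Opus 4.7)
The plan is to reduce to a small list of generating conformal maps and to verify invariance for each. By Liouville's theorem in dimension three, any conformal map of an open subset of $\R^3$ is a composition of translations, rotations, homotheties (uniform dilations), and inversions $I_{x_0,r}(x)=x_0+r^2(x-x_0)/|x-x_0|^2$. Translations and rotations are Euclidean isometries, so they preserve both $|H|$ pointwise and the area measure $d\Hn$; hence $\WW$ is trivially invariant under them. For a homothety $F(x)=\la x$ with $\la>0$, pulling back the induced metric shows that the mean curvature rescales as $|H_{F(\Si)}|\circ F=\la^{-1}|H_\Si|$, while $d\Hn_{F(\Si)}\circ F=\la^2\,d\Hn_\Si$; the two factors cancel and $\WW(F(\Si))=\WW(\Si)$.

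The genuine content is the invariance under an inversion $F=I_{x_0,r}$. The cleanest route is to show the pointwise identity
\begin{equation*}
(|H_{F(\Si)}|^2-K_{F(\Si)})\,d\Hn_{F(\Si)} = (|H_\Si|^2-K_\Si)\,d\Hn_\Si
\end{equation*}
holds at corresponding points of $\Si$ and $F(\Si)$, where $K$ denotes the Gauss curvature. I would prove this by writing the conformal factor $\Om(x)=r^2/|x-x_0|^2$ of the inversion, computing the transformation rule for the second fundamental form $A$ under a conformal change of the ambient metric (the traceless part $A^\circ=A-\tfrac12 Hg$ scales as a $(0,2)$ tensor while $H$ transforms with an additional gradient term of $\log\Om$ in the normal direction), and then observing that the quadratic invariant $|A^\circ|^2\,d\Hn = (|A|^2-\tfrac{1}{2}|H|^2)\,d\Hn = 2(|H|^2-K)\,d\Hn$ (using $|A|^2=4|H|^2-2K$ with our normalization of $H$ as the mean of principal curvatures) is pointwise conformally invariant. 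This is the computational heart of the argument and the step I expect to be the main obstacle.

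Once the pointwise invariance of $(|H|^2-K)\,d\Hn$ is established, the result follows immediately: since $\Si$ is compact and without boundary, so is $F(\Si)$ provided $x_0\notin\Si$ (which can always be arranged by the hypothesis $\Si\con\Om$ and by perturbing the center of inversion slightly if needed, using continuity), and $F:\Si\to F(\Si)$ is a diffeomorphism preserving Euler characteristic. Gauss--Bonnet gives
\begin{equation*}
\int_\Si K_\Si\,d\Hn \;=\; 2\pi\chi(\Si)\;=\;2\pi\chi(F(\Si))\;=\;\int_{F(\Si)} K_{F(\Si)}\,d\Hn.
\end{equation*}
Combining with the pointwise identity integrated over $\Si$ yields $\WW(\Si)=\WW(F(\Si))$, and composing the elementary invariances finishes the proof for every conformal $F$. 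The only delicate point to address is when the inversion center $x_0$ lies on $\Si$: one handles this by viewing the inversion on the one-point compactification $\R^3\cup\{\infty\}\cong S^3$ and working with the lifted immersion, where the point at infinity can be removed without affecting the (intrinsic) Willmore integral.
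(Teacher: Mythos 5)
The paper does not actually prove this statement: it is quoted as a classical result from \cite{WiRG}, with Liouville's theorem recalled immediately afterwards, and your argument is precisely the classical proof that citation refers to (Liouville reduction, trivial invariance under isometries and dilations, pointwise conformal invariance of $(|H|^2-K)\,d\HH^2$ through the traceless second fundamental form, then Gauss--Bonnet), so your route is correct and matches the intended one. Two small points: with $H$ the arithmetic mean of the principal curvatures the intermediate identity should read $|A^\circ|^2=|A|^2-2|H|^2$, not $|A|^2-\tfrac12|H|^2$ (your final equality $|A^\circ|^2=2(|H|^2-K)$ is the correct one), and the worry about the inversion centre lying on $\Si$ never arises here, since $F$ is assumed defined on all of $\Om\supset\Si$, so any inversion appearing in the Liouville decomposition has its centre outside $\Om$.
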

\begin{remark}
	We recall that, by Liouville's Theorem, conformal transformations of the Euclidean $\R^3$ are just compositions of translations, dilatations, orthogonal transformations and spherical inversions (for an interesting proof see \cite{PhLT}).
\end{remark}

\begin{thm}[Lower Bound for Immersed Surfaces, \cite{BaKu}]\label{thmlowerbound}
	Let $\Si$ be an immersed surface and $\xi\in\Si$ be a point with multiplicity $k$. If $I:\R^3\sm\{\xi\}\to \R^3\sm\{\xi\}$ is the standard spherical inversion about the sphere $S^2_1(\xi)$, then:
	\begin{equation}
	\WW(I(\Si\sm\{\xi\}))=\WW(\Si)-4\pi k.
	\end{equation}
\end{thm}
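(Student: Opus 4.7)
The plan is to combine two classical facts: the pointwise conformal invariance of $(|H|^2 - K)\,d\HH^2$ on immersed surfaces (equivalently, of the trace-free Willmore density $|A_0|^2\,d\HH^2$, where $A_0$ is the traceless part of the second fundamental form and $K$ denotes the Gauss curvature of the induced metric); and the Gauss--Bonnet theorem, applied both to the closed $\Si$ and to the non-compact image $\tS := I(\Si\sm\{\xi\})$. The deficit $4\pi k$ then emerges as the bookkeeping of Euler characteristics together with the geodesic-curvature contributions from the ends of $\tS$.

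First I would record that, since $I$ is a smooth conformal diffeomorphism of $\R^3\sm\{\xi\}$, the pointwise invariance on compact subsets of $\Si\sm\{\xi\}$, combined with monotone convergence as the removed neighbourhood of $\xi$ shrinks, yields
\[
	\int_\Si (|H|^2 - K)\,d\HH^2 \;=\; \int_{\tS}(|\tH|^2 - \tilde K)\,d\HH^2.
\]
Then I would analyse the ends of $\tS$. Because $\Si$ has multiplicity $k$ at $\xi$, locally $\Si$ is the union of $k$ smooth sheets through $\xi$, each expressible as a $C^2$-graph $z=h(u,v)=O(u^2+v^2)$ over its tangent plane. A direct computation in these coordinates shows that each sheet is mapped under $I$ (centred at $\xi$) to a graphical end at infinity whose tangent plane tends, as $R\to\infty$, to the corresponding tangent plane translated to pass through the origin, and on which both $|\tH|^2$ and $|\tilde K|$ are integrable. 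In particular $\tS$ is diffeomorphic to $\Si$ with $k$ punctures, so $\chi(\tS)=\chi(\Si)-k$, and $\tS$ has exactly $k$ such ends. Truncating $\tS_R:=\tS\cap B_R(\xi)$ and applying Gauss--Bonnet with smooth boundary along a sequence of regular values $R\to\infty$, the total geodesic curvature on each boundary circle tends to $2\pi$, since asymptotically each end is graphical over a plane on which a large circle has $\int\kappa_g=2\pi$. This gives
\[
	\int_{\tS}\tilde K\,d\HH^2 \;=\; 2\pi\chi(\tS)-2\pi k \;=\; 2\pi\chi(\Si)-4\pi k.
\]
Combining this with the closed identity $\int_\Si K\,d\HH^2 = 2\pi\chi(\Si)$ in the conformal-invariance equation above produces exactly $\WW(\tS)=\WW(\Si)-4\pi k$.

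The principal obstacle is the asymptotic analysis of the ends of $\tS$: writing each sheet of $\Si$ at $\xi$ as a graph over its tangent plane and tracking its image under $I$ carefully enough to justify both the decay estimates on $|\tH|^2$ and $|\tilde K|$ at infinity (so that all improper integrals converge and the passage to the limit as the excluded ball around $\xi$ shrinks is meaningful) and the convergence $\int\kappa_g \to 2\pi$ on each boundary component. An alternative, more computational route would be to derive directly the pointwise transformation rule for $H$ under $I$ and integrate it using a divergence identity; however, the Gauss--Bonnet path is closer in spirit to the conformal-geometric origin of the statement, and avoids keeping explicit track of all the lower-order terms in the curvature transformation law.
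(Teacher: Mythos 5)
Your argument is correct and is essentially the standard proof of this fact: the paper itself does not prove the statement but quotes it from Bauer--Kuwert (\cite{BaKu}, Theorem 2.2), and the proof there rests on exactly the two ingredients you use, namely the conformal invariance of $\int(|H|^2-K)\,d\HH^2$ (equivalently of the trace-free density $|A_0|^2\,d\HH^2$) and Gauss--Bonnet applied to the inverted surface, whose $k$ graphical ends account for $\chi$ dropping by $k$ and for a boundary contribution of $2\pi$ per end, yielding the $4\pi k$ deficit. The asymptotic analysis of the ends that you defer (decay of the inverted graphs, convergence of $\int\kappa_g$ to $2\pi$, integrability of $\tilde K$ and $|\tilde H|^2$) is precisely the technical content carried out in the cited reference, so your outline matches the intended proof.
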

\begin{remark} \label{remmoltvar}
	We immediately get from Theorem \ref{thmlowerbound} that if a surface $\Si$ has a point with multiplicity $k$, then $\WW(\Si)\ge4\pi k$.\\
	A similar argument holds for a varifold $V\in\VV(\bar{\Om})$ with square integrable generalized mean curvature in the sense that it holds:
	\begin{equation}
	\te(x)\le\frac{\WW(V)}{4\pi} \qquad \mu_V\mbox{-almost every }x,
	\end{equation}
	where $\te$ is the multiplicity function of $V$ and $\mu_V$ is the Radon measure given by $V$ on $\R^3$ (see \cite{KuSc}, Appendix A). In particular we get that if $\WW(V)<8\pi$, then the varifold has multiplicity $1$ $\mu_V$-almost everywhere.
\end{remark}

\noindent Now we state some results relating the Willmore and the Area functionals.
\begin{lemma}[\cite{SiEX}] \label{lemdisugsimon}
	If $\Si\con\R^3$ is a connected surface, then:
	\begin{displaymath}
	\sqrt{\frac{|\Si|}{\WW(\Si)}}\le {\rm diam}\Si \le \frac2\pi\sqrt{|\Si|\WW(\Si)},
	\end{displaymath}
	where ${\rm diam}\Si$ is the diameter of $\Si$ and $C$ is a constant independent of $\Si$.
\end{lemma}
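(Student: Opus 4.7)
\medskip
\noindent\textbf{Proof proposal.}

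For the \emph{lower bound} $\sqrt{|\Si|/\WW(\Si)}\le\operatorname{diam}\Si$, I would not use monotonicity at all, but only the first variation identity from Remark \ref{equtili}. Fix a point $y\in\Si$ and apply
\[
\int_\Si \operatorname{div}_\Si \Phi \,d\HH^2 = -2\int_\Si \lgl\Phi,H\rgl\, d\HH^2
\]
to the vector field $\Phi(x):=x-y$. Since $\operatorname{div}_\Si (x-y)=2$ on a $2$-surface, the left-hand side equals $2|\Si|$, giving the identity $|\Si|=-\int_\Si \lgl x-y, H\rgl$. Because $y\in\Si$, one has $|x-y|\le \operatorname{diam}\Si$ pointwise on $\Si$, so by the triangle and Cauchy--Schwarz inequalities
\[
|\Si|\le \operatorname{diam}\Si\cdot \int_\Si |H|\,d\HH^2 \le \operatorname{diam}\Si\cdot |\Si|^{1/2}\WW(\Si)^{1/2},
\]
which rearranges to the desired lower bound.

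For the \emph{upper bound} $\operatorname{diam}\Si\le C\sqrt{|\Si|\WW(\Si)}$, the strategy is a Vitali-type covering argument combined with the local lower density estimate furnished by (\ref{equtilefinale}). First, for any $y\in\Si$ and any $\ro>0$, (\ref{equtilefinale}) gives $\pi\le C(|\Si_\ro|/\ro^2+\WW(\Si_\ro))$, i.e.
\[
c\,\ro^2\le |\Si\cap B_\ro(y)|+\ro^2\,\WW(\Si\cap B_\ro(y)),
\]
for a universal $c>0$. Now choose a maximal family $\{B_{\ro/5}(y_i)\}_{i=1}^N$ of pairwise disjoint balls with centres $y_i\in\Si$; then $\{B_\ro(y_i)\}$ covers $\Si$, and the balls $B_\ro(y_i)$ have uniformly bounded overlap (a consequence of disjointness at scale $\ro/5$).

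Summing the local inequality over $i$ and using bounded overlap yields $Nc\ro^2\le C(|\Si|+\ro^2\WW(\Si))$, hence $N\le C(|\Si|/\ro^2+\WW(\Si))$. The key geometric step is then to exploit connectedness of $\Si$: for any two points $p,q\in\Si$ there is a continuous path in $\Si$ joining them, which passes through a finite chain of at most $N$ balls of radius $\ro$; therefore $|p-q|\le 2\ro N$, and so $\operatorname{diam}\Si\le 2\ro N\le C(|\Si|/\ro+\ro\,\WW(\Si))$. Optimising over $\ro$ (the natural choice is $\ro^2=|\Si|/\WW(\Si)$) gives $\operatorname{diam}\Si\le C\sqrt{|\Si|\WW(\Si)}$.

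The main obstacle is the upper bound: the lower density estimate from monotonicity is not by itself a clean lower bound on $|\Si_\ro|/\ro^2$ because the Willmore term enters additively, so one cannot directly conclude $|\Si\cap B_\ro(y)|\ge c\ro^2$ on every ball. The covering-plus-connectedness argument is designed precisely to absorb both terms into global quantities through a summation, and the subsequent optimisation in $\ro$ is what produces the product $|\Si|\WW(\Si)$ on the right-hand side. The connectedness hypothesis is essential here, since otherwise $\operatorname{diam}\Si$ can be arbitrarily large while $|\Si|$ and $\WW(\Si)$ stay bounded.
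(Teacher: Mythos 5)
Your proposal is correct, and the lower bound is proved exactly as in the paper: first variation with $\Phi(x)=x-y$, $\operatorname{div}_\Si(x-y)=2$, then Cauchy--Schwarz. For the upper bound you rely on the same key analytic ingredient as the paper, namely the monotonicity-derived estimate \eqref{equtilefinale}, $\pi\le C\big(|\Si\cap B_\ro(y)|/\ro^2+\WW(\Si\cap B_\ro(y))\big)$ for $y\in\Si$, summed over an essentially disjoint family of balls and followed by the same optimisation $\ro\sim\sqrt{|\Si|/\WW(\Si)}$; but the intermediate combinatorial step is genuinely different. The paper uses connectedness \emph{constructively}: starting from a fixed $y\in\Si$ it picks points $y_j\in\pa B_{(j+1/2)\ro}(y)\cap\Si$ for $j=0,\dots,N-1$ with $N=\lfloor \operatorname{diam}\Si/\ro\rfloor$, so that the balls $B_{\ro/2}(y_j)$ are automatically pairwise disjoint and their number is bounded below by $\operatorname{diam}\Si/(2\ro)$; summing \eqref{equtile3} then gives $\operatorname{diam}\Si\le C(\ro\,\WW(\Si)+\ro^{-1}|\Si|)$ directly, at the price of the constraint $\ro\le \operatorname{diam}\Si/2$, which is legitimised by the already-proved left inequality. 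You instead run a Vitali covering of all of $\Si$ at scale $\ro$, use bounded overlap (valid, with a dimensional constant, since the $B_{\ro/5}(y_i)$ are disjoint) to bound the number $N$ of covering balls by $C(|\Si|/\ro^2+\WW(\Si))$, and then use connectedness through a ball-chaining argument to get $\operatorname{diam}\Si\le 2\ro N$; this avoids any constraint on $\ro$ and does not use the left inequality, but requires the two extra (standard) lemmas on overlap and chains. Both routes land on the same inequality $\operatorname{diam}\Si\le C(|\Si|/\ro+\ro\,\WW(\Si))$ and the same choice of $\ro$, so your argument is a valid, slightly more covering-theoretic variant of Simon's original one.
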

\noindent Another fundamental inequality is the following:
\begin{lemma}[Willmore vs Area Inequality, \cite{MuRoCS}] \label{thmwillmorevsareainequality}
	Let $\Om\con B_1\con\R^3$ and let $V\in\VV_2(\bar{\Om})$ such that there exists the generalized mean curvature $H_V\in L^2(\bar{\Om},\mu_V)$. Then:
	\begin{equation} \label{eqwillmorevsareainequality}
	\WW(V):=\int_\Om H_V^2 \, d\mu_V \ge \bM(V),
	\end{equation}
	with equality if and only if $\mu_V=k\HH^2\res S^2$ and $S^2\con\bar{\Om}$, with $k\in \N_{>0}$.
\end{lemma}
\begin{remark} \label{rmkwillmorevsarea2}
	The inequality proved in Lemma \ref{thmwillmorevsareainequality} can be specialized in the case of $\Om\con B_{\frac{1}{2}}$. If $V\in \VV_2(\bar{\Om})$, by a simple scaling argument and using the conformal invariance of $\WW$ one gets that
	\begin{equation}
		\WW(V)\ge 4 \bM(V).
	\end{equation}
\end{remark}
\noindent From these results we establish some very useful inequalities, as stated in the following.
\begin{cor} \label{cordisugutili}
	If $\Si\con\bar{\Om}$ is a connected surface, then
	\begin{displaymath}
	\begin{split}
	&\WL(\Si)\ge |\Si|\bigg( \frac{1}{({\rm diam}\Si)^2}-\La  \bigg) ,\\
	&\WL(\Si)\ge \WW(\Si)(1-\La ({\rm diam}\Si)^2), \\
	&\WL(\Si)\le \WW(\Si)-\frac{\La}{C^2}\frac{({\rm diam}\Si)^2}{\WW(\Si)}, \\
	&\W(\Si)\ge\frac{1}{({\rm diam}\Si)^2}, \\
	&\W(\Si)\ge \frac{1}{C^2} \frac{({\rm diam}\Si)^2}{|\Si|^2},
	\end{split}
	\end{displaymath}
	where $C$ is the constant in Lemma \ref{lemdisugsimon}.\\
	If $\Om\con B_1$ then
	\begin{displaymath}
	\begin{split}
	&\WL(\Si)\ge |\Si|( 1-\La  ) ,\\
	&\WL(\Si)\ge \WW(\Si)(1-\La ), \\
	&\W(\Si)\ge1.
	\end{split}
	\end{displaymath}
\end{cor}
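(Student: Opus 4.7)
The plan is to derive the eight inequalities by purely algebraic manipulation of the definitions $\WL(\Si)=\WW(\Si)-\La|\Si|$ and $\W(\Si)=\WW(\Si)/|\Si|$, combined with the two ingredients already in hand: Simon's double diameter estimate of Lemma \ref{lemdisugsimon} for (1)--(5), and the Willmore vs Area inequality of Theorem \ref{thmwillmorevsareainequality} for (6)--(8). No new geometric input is needed; the corollary is essentially a convenient repackaging.

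For the first three inequalities and for (4), I would rewrite the left half $\sqrt{|\Si|/\WW(\Si)}\le\diam\Si$ of Lemma \ref{lemdisugsimon} in the two equivalent forms $\WW(\Si)\ge|\Si|/(\diam\Si)^2$ and $|\Si|\le\WW(\Si)(\diam\Si)^2$. Subtracting $\La|\Si|$ from both sides of the first form yields (1); substituting the second form into $\WL=\WW-\La|\Si|$ yields (2); dividing the first form by $|\Si|$ yields (4). For (3) and (5) I would instead invoke the right half $\diam\Si\le C\sqrt{|\Si|\WW(\Si)}$, which reads $|\Si|\ge(\diam\Si)^2/(C^2\WW(\Si))$, or equivalently $\WW(\Si)\ge(\diam\Si)^2/(C^2|\Si|)$. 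Plugging the first of these into $\WL=\WW-\La|\Si|$ produces (3), and dividing the second by $|\Si|$ produces (5).

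For (6)--(8), the additional hypothesis $\Om\subset B_1$ allows me to apply Theorem \ref{thmwillmorevsareainequality} to the integer rectifiable $2$-varifold naturally associated to the smooth compact surface $\Si$, yielding $\WW(\Si)\ge|\Si|$. Then (6) is immediate from $\WL=\WW-\La|\Si|\ge|\Si|-\La|\Si|=(1-\La)|\Si|$; for (7) I use instead $\La|\Si|\le\La\WW(\Si)$, obtaining $\WL\ge(1-\La)\WW(\Si)$; and (8) follows from $\W(\Si)=\WW(\Si)/|\Si|\ge 1$ on dividing through by $|\Si|$.

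There is essentially no genuine obstacle in the argument: the content of the corollary is a bookkeeping exercise. The only care required is to select the correct direction of Simon's double inequality for each bound, and to remember that a smooth compact surface without boundary, contained in $\bar{\Om}$, is canonically an element of $\VV_2(\bar{\Om})$ so that Theorem \ref{thmwillmorevsareainequality} is applicable.
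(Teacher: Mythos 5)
Your proposal is correct and is exactly the argument the paper intends: the corollary is stated without a separate proof precisely because (1)--(5) follow by the algebraic rearrangements of the two halves of Lemma \ref{lemdisugsimon} that you give, and (6)--(8) follow from Theorem \ref{thmwillmorevsareainequality} applied to the varifold induced by $\Si\con\bar{\Om}\con\bar{B}_1$. Your choice of which half of Simon's inequality feeds each item, including the reversed direction in (3), matches the intended derivation.
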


\noindent Finally, we derive a simple but useful result about invariance under dilatation.
\begin{lemma} \label{leminvarianza}
	For all $\Si$ surface and for all $\al>0$ it holds:
	\begin{displaymath}
	\WW(\Si)=\WW(\al\Si),
	\end{displaymath}
	\begin{displaymath}
	\WL(\Si)=\mathcal{W}_{\frac{\La}{\al^2}}(\al\Si),
	\end{displaymath}
	\begin{displaymath}
	\W(\Si)=\al^2\W(\al\Si).
	\end{displaymath}
\end{lemma}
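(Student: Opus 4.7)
The plan is to reduce all three identities to the elementary scaling behavior of area and of mean curvature under the dilation $\phi_\al : x \mapsto \al x$. First I would record two basic transformation rules. Since $\phi_\al$ multiplies every distance by $\al$, the Hausdorff measure transforms as $\HH^2(\phi_\al(A)) = \al^2 \HH^2(A)$ for every Borel set $A\con\Si$; in particular $|\al\Si|=\al^2|\Si|$. The mean curvature vector, being a geometric quantity with the dimensions of inverse length, satisfies $H_{\al\Si}(\al x) = \al^{-1} H_\Si(x)$: this can be read off either from the observation that the principal curvatures of $\al\Si$ at $\al x$ are $1/\al$ times those of $\Si$ at $x$, or directly from the first variation formula applied to the rescaled immersion.

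With these two facts in hand, the first identity follows by an elementary change of variables:
\begin{displaymath}
\WW(\al\Si) = \int_{\al\Si} |H_{\al\Si}|^2 \, d\HH^2 = \int_\Si \al^{-2}|H_\Si|^2 \cdot \al^2 \, d\HH^2 = \WW(\Si),
\end{displaymath}
which is the classical dilation invariance of the Willmore energy already mentioned in the introduction. The second identity is then immediate:
\begin{displaymath}
\mathcal{W}_{\La/\al^2}(\al\Si) = \WW(\al\Si) - \frac{\La}{\al^2}|\al\Si| = \WW(\Si) - \frac{\La}{\al^2}\cdot\al^2|\Si| = \WL(\Si),
\end{displaymath}
and the third follows by dividing the first by $|\al\Si| = \al^2|\Si|$, yielding $\W(\al\Si) = \al^{-2}\W(\Si)$, i.e.\ $\W(\Si) = \al^2 \W(\al\Si)$.

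There is no real obstacle here; the calculation is entirely routine once the scaling of $H$ is isolated. The only sentence that deserves a little care is the one fixing the normalization convention for $H$ so that $H_{\al\Si}(\al x) = \al^{-1}H_\Si(x)$ holds literally, as opposed to a version off by a multiplicative constant coming from the arithmetic-mean convention recalled at the start of the introduction. Once the convention is fixed, the three identities are simply three different ways of repackaging the same two dimensional factors $\al^{\pm 2}$.
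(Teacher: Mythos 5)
Your proposal is correct and follows essentially the same route as the paper: the second and third identities are obtained in both cases from the single fact $|\al\Si|=\al^2|\Si|$ once the invariance $\WW(\al\Si)=\WW(\Si)$ is in hand. The only difference is that the paper disposes of the first identity by citing conformal invariance of $\WW$ (dilatations being conformal maps of $\R^3$), whereas you rederive the dilation case directly from the scalings $H_{\al\Si}(\al x)=\al^{-1}H_\Si(x)$ and $\HH^2(\al A)=\al^2\HH^2(A)$; this is a harmless, slightly more self-contained variant, and your remark about fixing the normalization of $H$ is sensible but immaterial, since any constant in the convention cancels in the ratio $\al^{-2}\cdot\al^{2}$.
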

\begin{proof} The first equation is a consequence of the conformal invariance of the Willmore functional. For the same property we have:
	\begin{displaymath}
	\WL(\Si)=\left(\int_\Si H_\Si^2\right)-\La|\Si|=\left(\int_{\al\Si} H_{\alpha H}^2\right) -\La|\Si|=\WW(\alpha \Si) -\frac{\La}{\al^2}|\al\Si|=\mathcal{W}_{\frac{\La}{\al^2}}(\al\Si).
	\end{displaymath}
	By the same token we get the last equality:
	\begin{displaymath}
	\W(\al\Si)=\frac{\WW(\Si)}{|\al\Si|}=\frac{\WW(\Si)}{\al^2|\Si|}=\frac{1}{\al^2}\W(\Si).
	\end{displaymath}
\end{proof}
\begin{remark} \label{reminvarianza}
	From Lemma \ref{leminvarianza} we see that from the variational point of view we have the following equivalence of problems:
	\begin{equation}
	\begin{split}
	&(P)_{\Om,\La} \longleftrightarrow (P)_{\al\Om,\frac{\La}{\al^2}},
	\end{split}
	\end{equation}
	in the sense that if we have that for a couple $(\Om,\La)$ there exists minimum of $(P)_{\Om,\La} $ then the same holds for the couple $(\al\Om,\La/\al^2)$ and with the same value of minimum (and the same holds in case of nonexistence of minima).\\
	For these reasons in the study of Problem $(P)_{\Om,\La}$ with generic $\Om$ we will exploit this invariance assuming $\Om\con B_{1/2}$ without loss of generality.
\end{remark}

\section{Compactness and properties of $C_\La$}

\noindent This section is devoted to the proof of items $i),\, ii)$ of Theorem A and of Theorem B. Let us start with a significant example.

\begin{example} \label{exdante}
	If $\La>1$ then
	\begin{displaymath}
	\inf_{\Si\con\bar{B_1}} \WL(\Si) =-\infty.
	\end{displaymath}
	In fact let us define the sequence of surfaces
	\begin{equation}
	\begin{split}
	&D^k=S^2_{r_1}\cup \dots \cup S^2_{r_k} \con B_1, \\
	&r_i=\frac{1}{\sqrt{\La}} + \frac{i-1}{k}\bigg( 1-\frac{1}{\sqrt{\La}}   \bigg) \qquad i=1,\dots ,k,
	\end{split}
	\end{equation}
	that is a surface made of $k$ concentric spheres with minimum radius $r_1=1/\sqrt{\La}$, $r_i<r_{i+1}$ for $i=1,\dots ,k-1$ and maximum radius $r_k<1$ (since $\La>1$). We have:
	\begin{displaymath}
	\begin{split}
	\WL(D^k)&=4\pi k -\La \sum_{i=1}^{k} 4\pi r_i^2 \\
	&= 4\pi\bigg(  k-k-\La  \sum_{i=1}^{k}  \bigg( 1-\frac{1}{\sqrt{\La}}   \bigg) ^2  \frac{1}{k^2} (i-1)^2 + \frac{2}{k\sqrt{\La}}\bigg( 1-\frac{1}{\sqrt{\La}}   \bigg)(i-1)  \bigg) \\
	&\le -4\pi \La \frac{2}{\sqrt{\La}}\bigg( 1-\frac{1}{\sqrt{\La}}   \bigg) \frac{1}{k}\bigg(  \frac{k(k+1)}{2} -k \bigg) \to -\infty \qquad\qquad k\to+\infty,
	\end{split}
	\end{displaymath}
	where we strongly used the fact that $\La>1$.
\end{example}
\noindent Now we see that we can actually connect together the rounds of the previous example in a way in which we are able to obtain the same conclusion also in the case in which the problem is restricted to connected surfaces. We are going to see this in a general way as stated in next lemma.

\begin{lemma} \label{scattomenoinfinito}
	If there exists an embedded surface $\Si\con\bar{\Om}$ such that $\WL(\Si)<0$, then there exists a sequence of embedded surfaces $\Si_n$ such that $\WL(\Si_n)\to-\infty$. In particular $C_\La=-\infty$.\\
	Moreover if $\Si$ is connected, the surfaces $\Si_n$ can be taken connected.
\end{lemma}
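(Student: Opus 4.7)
Plan: The strategy is to stack many nested parallel (offset) copies of $\Sigma$ in a tubular neighbourhood of $\Sigma$, and, when connectedness is required, to tie consecutive parallels together with thin almost-minimal necks. The naive alternative of packing many rescaled copies of $\Sigma$ side by side inside $\bar\Omega$ fails because $\WW$ is scale invariant: $N$ disjoint copies of $\alpha\Sigma$ contribute $N\WW(\Sigma)$ to the Willmore term while the packing constraint forces $N\lesssim 1/\alpha^3$, so the Willmore term blows up positively as $\alpha\to 0$ and dominates the gain in the area term. Parallel copies, by contrast, are nested in a thin shell around $\Sigma$ and do not compete for volume.

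After a $C^2$-small perturbation preserving $\WL(\Sigma)<0$ by continuity of $\WW$ and area, I may assume $\Sigma$ is smooth and strictly inside $\Omega$ with $\mathrm{dist}(\Sigma,\partial\Omega)\ge\delta>0$. Let $\nu$ be a global unit normal of $\Sigma$, which exists by orientability of closed surfaces embedded in $\R^3$, and set $t_\ast:=\min\{\delta,1/\|A\|_{L^\infty(\Sigma)}\}$ with $A$ the second fundamental form. For every $t\in[0,t_\ast)$ the parallel surface $\Sigma^{(t)}:=\{x+t\nu(x):x\in\Sigma\}$ is smooth, embedded, contained in $\bar\Omega$, and $\Sigma^{(t)}$, $\Sigma^{(s)}$ are disjoint for $t\neq s$. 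Since $|\Sigma^{(t)}|$ and $\WW(\Sigma^{(t)})$ are smooth integrals in $t$ and in the principal curvatures of $\Sigma$, $\WL(\Sigma^{(t)})\to\WL(\Sigma)=-c$ as $t\to 0^+$, where $c>0$, so one can fix $t_0\in(0,t_\ast)$ with $\WL(\Sigma^{(t)})\le -c/2$ for every $t\in[0,t_0]$. Putting $\Sigma_n:=\bigsqcup_{i=0}^{n-1}\Sigma^{(it_0/n)}$ and using additivity of $\WW$ and area on disjoint unions yields $\WL(\Sigma_n)\le -nc/2\to -\infty$, which proves the first assertion and in particular $C_\La=-\infty$.

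For the moreover clause, with $\Sigma$ connected I build a connected surface $\tilde\Sigma_n$ by joining consecutive parallels with thin necks: around a fixed axis transverse to both, I excise a disk of radius $\epsilon_n$ from each of $\Sigma^{(it_0/n)}$ and $\Sigma^{((i+1)t_0/n)}$ and glue in a rotationally symmetric approximately catenoidal neck of waist $\epsilon_n$ and height $t_0/n$, which exists once $\epsilon_n\ge c_0 t_0/n$ for a suitable constant $c_0$. Because the catenoidal body is minimal, the Willmore contribution of each neck is confined to two thin smoothing collars and is bounded by a constant $M$ depending only on $\|A\|_{L^\infty(\Sigma)}$ and on the chosen smoothing length scale, while the neck area only decreases $\WL$. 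Hence $\WL(\tilde\Sigma_n)\le -nc/2+(n-1)M\to -\infty$ whenever $M<c/2$, an inequality that I enforce by shrinking the smoothing length scale. The main technical obstacle of the argument is exactly this last point: designing explicit smooth gluing profiles whose per-neck Willmore overhead stays uniformly below $c/2$, for which the vanishing of $H$ on the catenoidal body of the neck is essential.
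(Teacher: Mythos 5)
Your disconnected construction (stacking $n$ parallel offsets $\Si^{(it_0/n)}$ of a slightly pushed-in copy of $\Si$ and using additivity of $\WW$ and of the area) is correct and is in fact simpler and more linear than the paper's scheme, which instead doubles the surface at each step (one parallel copy at distance $1/M$, joined by a catenoidal neck) and iterates, getting $\WL(\Si_n)\le 2^n\WL(\Si)+\text{errors}$ with errors made summable by taking $\ep^n$ at step $n$. Two small repairs to your first half: the threshold $t_\ast$ should be the normal injectivity radius (reach) of $\Si$ rather than $\min\{\de,1/\|A\|_{L^\infty}\}$, since $1/\|A\|_{L^\infty}$ only rules out focal self-intersections and not collisions between distant sheets of $\Si$; and the initial ``$C^2$-small perturbation'' into the open set should be phrased as flowing $\Si$ for a short time along a smooth extension of the inward normal of $\pa\Om$, which is what the paper does near the contact set $\Si\cap\pa\Om$.

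The genuine gap is in the connected case, precisely at the point you flag. Because $\WW$ is invariant under dilations, ``shrinking the smoothing length scale'' cannot reduce the per-neck Willmore overhead at all: the cost of a gluing collar depends on the angle through which the surface must turn and on the ratio of the relevant radii, not on their absolute size. With your parameters the excised discs and the neck waist $\ep_n$ are allowed to be comparable to the sheet separation $h_n=t_0/n$ (your existence condition $\ep_n\ge c_0t_0/n$ pins you essentially to this regime), and then a catenoid spanning the gap meets the two sheets at an angle bounded away from zero; the turning must moreover be accomplished inside a slab of height $h_n$, so it cannot be spread over many dyadic annuli (the catenoid flattens only like $a\log$, which would exceed the available vertical room). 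Consequently the per-neck cost is bounded below by a universal positive constant, and since $c=|\WL(\Si)|$ may be arbitrarily small, the inequality $M<c/2$ cannot be ``enforced'' as claimed, and $-nc/2+(n-1)M$ need not diverge to $-\infty$. The fix is to change the knob: excise discs of radius $\ro_n$ with $h_n/\ro_n\to0$ (e.g.\ $\ro_n=\sqrt{t_0/n}$), so that a catenoidal neck of small waist meets the sheets at angle $O(h_n/\ro_n)\to0$ and the smoothing cost per neck tends to zero, while the total excised area $n\ro_n^2$ stays bounded, so the area bookkeeping is unaffected; one must also place the necks at laterally separated positions and keep the collars' vertical excursion below half the gap so adjacent sheets are not hit. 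This ``gap much smaller than hole radius'' regime is exactly what the paper exploits implicitly: it joins two copies at distance $1/M$ inside a ball of fixed radius $\de_0$, so the ratio gap/graph-size is $O(1/(M\de_0))$, and after blow-up the neck $\tilde{C}$ can be taken with $\WW(\tilde{C})\le\ep$. With that modification your argument closes and gives an alternative, non-iterative proof of the lemma.
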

\begin{proof}
	We are going to reproduce the idea of Example \ref{exdante} with the surface $\Si$ in the hypothesis of the statement. Let us fix $\ep\in(0,1)$. First we notice that it may occur that $\Si_c:=\Si\cap \pa \Om\neq\emptyset$, and so we suppose we are in this situation (the case $\Si_c=\emptyset$ will be a simpler by-product of this case). Let us fix for each connected component $\Si_c^\al$ (note that $\Si_c^\al$ is compact) a field $N^\al\in\Nau(\Si_c)$ such that $N^\al$ point inside $\Om$ for each $\al$, where $\Nau(\Si_c)$ denotes the normal bundle of $\Si_c$. Now fix an open neighborhood $U^\al\con\Si$ of each $\Si_c^\al$ such that $dist(p,\Si_c^\al)<\de$ for each $p\in U^\al$ and $U^\al\cap U^\be =\emptyset$ for all $\al\neq \be$. Let for all $\al$ the functions $\phi^\al\in C^\infty_c(\Si)$ such that $\phi^\al(p)=1$ for all $p\in \Si_c^\al$ and $\phi^\al(p)=0$ for all $p\in \Si\sm U^\al$. Now mapping:
	\begin{displaymath}
	U^\al \ni p \longmapsto p+\de N^\al(p)\phi^\al(p) \in \Om,
	\end{displaymath}
	we obtain a new embedded surface $\Si'\con\Om$ such that for an appropriate choice of $\de$ above sufficiently small we have:
	\begin{displaymath}
	\WL(\Si')=\WL(\Si)+\ep.
	\end{displaymath}
	Since $\Si'$ is compact and embedded, it is orientable, so there exists a field $N\in\Nau(\Si')$ that orients the surface. For $M\in\R$ sufficiently big we can consider the surface:
	\begin{displaymath}
	\Si'_M:=\bigg\{p+\frac{1}{M} N(p):p\in\Si'   \bigg\}\con\Om \qquad\mbox{s.t.}\quad\WL(\Si'_M)=\WL(\Si')+\ep.
	\end{displaymath}
	Now we are going to connect together $\Si'$ with $\Si'_M$ in order to obtain the first term $\Si_1$ of the desired sequence $(\Si_n)$. Select $\bar{p}\in\Si'$ and consider the corresponding $\bar{p}_M=\bar{p}+\frac{1}{M} N(\bar{p})\in\Si'_M$. Letting $\bar{q}$ the middle point between $\bar{p}$ and $\bar{p}_M$, there exists $\de_0$ such that $(\Si'\cup\Si'_M)\cap B_{\de_0}(\bar{q})$ is diffeomorphic to the disjoint union of two 2-dimensional discs. Operating a blow up procedure by a factor $\Ga$ sufficiently big on  $(\Si'\cup\Si'_M)\cap B_{\de_0}(\bar{q})$ we obtain a surface $C^\infty$-close to the disjoint union of two 2-dimensional discs. By removing appropriate sets $\Ga D_1$ and $\Ga D_2$ diffeomorphic to a disc from each disconnected component, we see that we can connect the remaining surfaces (diffeomorphic to a disjoint union of two 2-dimensional annular surfaces) with a modification $\Ga \tC$ of the catenoid that is $C^2$ close to the standard catenoid and such that $\La(|D1\cup D_2|-|\tC|)|\le\ep$ and $\WW(\tC)\le\ep$ (for an explicit construction see \cite{Wo}). Rescaling back in $\Om$ and using the dilatation invariance we see that we have obtained a connected embedded surface $\Si_1\con\Om$ such that:
	\begin{displaymath}
	\begin{split}
	\WL(\Si_1) &=\WL(\Si')+\WL(\Si'_M) +\WW(\tC)-\La|\tC|-\WW(D1\cup D_2)+\La|D1\cup D_2| \\
	&\le  2\WL(\Si)+5\ep.
	\end{split}
	\end{displaymath}
	Now we can clearly iterate the procedure obtaining $\Si_2$, and in this case, by arbitrariness on the value of $\ep$, we can take a value $\ep^2$. Thus, using the notation above with an additional index 1 to distinguish from the previous quantities, we get a connected embedded surface $\Si_2\con\Om$ such that:
	\begin{displaymath}
	\begin{split}
	\WL(\Si_2)&=\WL(\Si_1)+\WL(\Si_{1,M_1})+\WW(\tC_1)+\La(|D_{1,1}\cup D_{2,1}|-|\tC_1|)-\WW(D_{1,1}\cup D_{2,1}) \\
	&\le 2(2\WL(\Si)+5\ep)+3\ep^2\\
	&=2^2 \WL(\Si)+5(2\ep)+3\ep^2.
	\end{split}
	\end{displaymath}
	So iterating the procedure taking $\ep^{n}$ when constructing $\Si_n$ we obtain:
	\begin{displaymath}
	\begin{split}
	\WL(\Si_n)&\le 2^n\WL(\Si)+5(2^{n-1}\ep)+3\sum_{i=2}^{n} 2^{n-i}\ep^i\\
	&\le 2^n\WL(\Si)+5(2^{n-1}\ep)+3\frac{2^{n-2}}{1-\ep} \quad\longrightarrow-\infty \qquad\qquad \mbox{as }n\to\infty,
	\end{split}
	\end{displaymath}
	being $\WL(\Si)<0$.
\end{proof}

\noindent The previous discussion allows us to solve completely Problem $(P)$ in the ball $B_1$:
\begin{thm}[Solution of $(P)_{B_1,\La}$)] \label{P2inB1}
	If $\Om=B_1$ the minimization problem $(P)_{B_1,\La}$ admits a solution if and only if $\La\le1$, in which case the minimum is $4\pi(1-\La)$. If $\La<1$ the unique minimizer is the unit sphere $S^2$. Moreover for all $\La>1$ the infimum of the problem is $-\infty$.
\end{thm}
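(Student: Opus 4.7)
The plan is to split on the sign of $1-\La$. For $\La\le 1$ the result will follow from Theorem \ref{thmwillmorevsareainequality} applied to $\Om=B_1$, combined with the classical Willmore lower bound $\WW(\Si)\ge 4\pi$; for $\La>1$ it will reduce to Example \ref{exdante} together with Lemma \ref{scattomenoinfinito}.

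For $\La\le 1$, I would rewrite
\begin{equation*}
\WL(\Si)=\WW(\Si)-\La|\Si|=(1-\La)\,\WW(\Si)+\La\bigl(\WW(\Si)-|\Si|\bigr),
\end{equation*}
so that both coefficients $1-\La$ and $\La$ are non-negative, and then bound the first summand from below by $\WW(\Si)\ge 4\pi$ and the second by Theorem \ref{thmwillmorevsareainequality} in the form $\WW(\Si)\ge|\Si|$ (valid for every $\Si\con\bar B_1$). This immediately gives $\WL(\Si)\ge 4\pi(1-\La)$, and since $\WW(S^2)=|S^2|=4\pi$ for $S^2=\pa B_1$ the round sphere attains the bound with $\WL(S^2)=4\pi(1-\La)$, producing both existence and the explicit value of the minimum.

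For uniqueness I would inspect the equality cases. If $\La<1$, an equality $\WL(\Si)=4\pi(1-\La)$ forces separately $\WW(\Si)=4\pi$ and $\WW(\Si)=|\Si|$; the rigidity part of Theorem \ref{thmwillmorevsareainequality} then identifies $\Si$, as a varifold, with $k\,\HH^2\res S^2$ for some $k\in\N_{>0}$, and the constraint $\WW=4\pi k=4\pi$ forces $k=1$. In the borderline case $\La=1$ only $\WW(\Si)=|\Si|$ is forced, but the same rigidity statement gives $\Si=k\,S^2$, and embeddedness of any admissible smooth competitor rules out $k\ge 2$, so again $\Si=S^2$.

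Finally, for $\La>1$ I would observe that the Dante $k$-surfaces of Example \ref{exdante} are contained in $\bar B_1$ and satisfy $\WL(D^k)\to-\infty$, which already settles the disconnected case; picking $k$ so large that $\WL(D^k)<0$ and applying Lemma \ref{scattomenoinfinito} with $\Om=B_1$ then produces connected embedded surfaces $\Si_n\con\bar B_1$ with $\WL(\Si_n)\to-\infty$, so $C_\La=-\infty$ in both regimes. The only delicate point of the whole argument is the borderline $\La=1$: the coercive term $(1-\La)\WW(\Si)$ degenerates and the inequality $\WL(\Si)\ge 0$ is saturated by every $k$-fold sphere, so uniqueness has to be extracted from the rigidity half of Theorem \ref{thmwillmorevsareainequality} rather than from the Willmore bound alone.
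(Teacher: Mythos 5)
Your treatment of $\La\le 1$ is essentially the paper's own proof: the decomposition $\WL(\Si)=(1-\La)\WW(\Si)+\La(\WW(\Si)-|\Si|)$ is just a rewriting of the chain $\WL(\Si)\ge \WW(\Si)(1-\La)\ge 4\pi(1-\La)$ (resp.\ $\ge|\Si|(1-\La)$) that the paper uses, both the value $4\pi(1-\La)$ attained by $S^2=\pa B_1$ and the uniqueness come from the same two ingredients ($\WW\ge 4\pi$ and the rigidity half of Theorem \ref{thmwillmorevsareainequality}), and your explicit handling of the borderline $\La=1$ via the equality case $\WW(\Si)=|\Si|$ is exactly how the paper's terse uniqueness line should be read.

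The one real slip is in the connected case for $\La>1$: Lemma \ref{scattomenoinfinito} produces \emph{connected} surfaces $\Si_n$ with $\WL(\Si_n)\to-\infty$ only when the seed surface $\Si$ with $\WL(\Si)<0$ is itself connected, whereas you feed it the disconnected Dante surface $D^k$ of Example \ref{exdante}; as stated, the lemma then only returns (possibly disconnected) surfaces, which you already had from the example itself. The repair is immediate and is what the paper does: for $\La>1$ the connected surface $S^2=\pa B_1$ already satisfies $\WL(S^2)=4\pi(1-\La)<0$, so applying Lemma \ref{scattomenoinfinito} with this connected seed gives connected embedded surfaces $\Si_n\con\bar{B}_1$ with $\WL(\Si_n)\to-\infty$; the Dante surfaces are not actually needed for this theorem once the lemma is invoked with $S^2$.
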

\begin{proof}
	The last part of the statement is a consequence of Example \ref{exdante} and Lemma \ref{scattomenoinfinito}, in fact for $\La>1$ we have $\WL(S^1)<0$.\\
	If we consider $\La\le1$, applying Lemma \ref{thmwillmorevsareainequality}, we get $\WL(\Si)\ge |\Si|(1-\La)\ge 0$, hence as for the minimization problem we can restrict ourselves to connected surfaces. Moreover $\WL (\Si)\ge \WW(\Si)(1-\La)\ge4\pi(1-\La)=\WL(S^2)$. So $S^2$ is a minimizer.\\
	If $\La<1$, the uniqueness of the minimizer follows having $\WL(\Si)\ge\WW(\Si)(1-\La)\ge|\Si|(1-\La)$ for all $\Si$, with equality if and only if $\Si=S^2$.
\end{proof}

\begin{remark} [Upper Bound for $\LO$] \label{upperboundLO}
	Combining Example \ref{exdante} with the proof of Lemma \ref{scattomenoinfinito} we see that if there exist two balls $B_1(p),B_{1-\de}(p)$ such that $\bar{B}_1(p)\sm B_{1-\de}(p)\con\bar{\Om}$ for some $\de>0$ then a minimizing sequence of connected surfaces $(\Si_n)$ can be realized inside $\bar{B}_1(p)\sm B_{1-\de}(p)$ and thus for all $\La>1$ we have $C_\La=-\infty$. \\
	By rescaling invariance this means that if two balls $B_r(p),B_{r-\de}(p)$ are such that $\bar{B}_r(p)\sm B_{r-\de}(p)\con\bar{\Om}$ for a $\de>0$, then for all $\La>\frac{1}{r^2}$ we have $C_\La=-\infty$ (in other words $\LO\le \frac{1}{r^2}$).
\end{remark}

\vspace{0.5cm}
\noindent Now we turn our attention to the study of the general Problem $(P)_{\Om,\La}$. When no other is specified, $\Om$ is assumed to be open, with boundary of class $C^2$ and 
\[
\Om\con B_{\frac12}.
\]
By the rescaling invariance of Remark \ref{reminvarianza} we can do this without loosing any information. \\
Let us first make a simple observation.
\begin{remark}[Monotonicity]
	It is very important to keep in mind a simple monotonicity relation about functional $\WL$:
	\begin{equation}
	\La_1>\La_2 \qquad \Rightarrow \qquad \mathcal{W}_{\La_1} (\Si)<\mathcal{W}_{\La_2}(\Si) \quad\mbox{and}\quad C_{\La_1}\le C_{\La_2}.
	\end{equation}
\end{remark}

\noindent Moreover let us define a useful parameter:
\begin{equation}
\ep_\Om :=\sup \{ r>0: \exists\de>0,\exists B_r(p),B_{r-\de}(p)\mbox{ s.t. } \bar{B}_r(p)\sm B_{r-\de}(p)\con \bar{\Om}  \},
\end{equation}
so that by Remark \ref{upperboundLO} we have $\LO\le \frac{1}{\ep_\Om^2}$.
\begin{lemma} \label{lemmaref}
	If $\La>\LO$ then $C_\La=-\infty$.
\end{lemma}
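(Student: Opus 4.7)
The plan is a direct reduction to Lemma \ref{scattomenoinfinito}. The starting observation is the elementary identity
\[
\WL(\Si) \;=\; |\Si|\bigl(\W(\Si)-\La\bigr),
\]
from which one reads off that an embedded surface $\Si\subset\bar\Om$ satisfies $\WL(\Si)<0$ if and only if $\W(\Si)<\La$. By Lemma \ref{scattomenoinfinito}, the existence of just one such $\Si$ already forces $C_\La=-\infty$. So the whole task reduces to producing, for every $\La>\LO$, at least one embedded surface $\Si\subset\bar\Om$ with $\W(\Si)<\La$.

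The crux is then the identification $\LO=\tLO$, with $\tLO=\inf_{\Si\subset\bar\Om}\W(\Si)$ as in \eqref{definitions}. The inequality $\LO\ge\tLO$ is the easy half and follows by monotonicity of $C_\La$: if $\La<\tLO$, the identity above gives $\WL(\Si)\ge|\Si|(\tLO-\La)>0$ for every admissible $\Si$, hence $C_\La\ge 0>-\infty$, which forces $\La\le\LO$. The converse inequality $\LO\le\tLO$ is exactly the content of the present lemma. To prove it, fix $\La>\tLO$: by the very definition of infimum there is an embedded surface $\Si_0\subset\bar\Om$ with $\W(\Si_0)<\La$, hence $\WL(\Si_0)<0$, and Lemma \ref{scattomenoinfinito} applied to $\Si_0$ yields a sequence of embedded surfaces $\Si_n\subset\bar\Om$ with $\WL(\Si_n)\to-\infty$. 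Therefore $C_\La=-\infty$, as claimed.

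The only point requiring care is that the surface supplied by the infimum definition of $\tLO$ be embedded, since Lemma \ref{scattomenoinfinito} takes an embedded $\Si$ as hypothesis. This is however automatic in the present setting: throughout the paper the notation $\Si\subset\bar\Om$ is reserved for compact, boundaryless, embedded surfaces, so the infimum defining $\tLO$ is already over the correct class. I do not foresee any substantial obstacle beyond this bookkeeping; the real work is already packaged inside Lemma \ref{scattomenoinfinito} (the Dante-type construction plus catenoidal connecting necks), and the present lemma is essentially its natural corollary.
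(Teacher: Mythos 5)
Your proof is correct, but it takes a genuinely different route from the paper's. The paper's argument is purely formal and takes two lines: since $\LO$ is the threshold value (the infimum of the parameters $\la$ with $C_\la=-\infty$), for $\La>\LO$ there is $\la\in(\LO,\La)$ with $C_\la=-\infty$, and the monotonicity $\mathcal{W}_\la(\Si)>\WL(\Si)$ for $\la<\La$ gives $C_\La\le C_\la=-\infty$; no geometric construction is invoked at this point. You instead route everything through $\tLO$ and Lemma \ref{scattomenoinfinito}: the identity $\WL(\Si)=|\Si|\,(\W(\Si)-\La)$, the easy inequality $\tLO\le\LO$, and then the Dante/catenoid construction to drive the energy to $-\infty$ once a single surface with $\WL(\Si)<0$ is found. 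This is heavier machinery, but it buys more: your step showing $C_\La=-\infty$ for every $\La>\tLO$, combined with the easy half, yields the identity $\LO=\tLO$, which the paper only establishes later, inside the proof of Theorem \ref{thmbehaviour} (Lemma \ref{LO=tLO}), by a related but different argument based on surfaces with $\mathcal{W}_{\La_n}(\Si_n)=0$ for $\La_n\to\LO^+$; and there is no circularity, since Lemma \ref{scattomenoinfinito} precedes the present lemma. Two minor points of care: the step ``$C_\La\ge0$ forces $\La\le\LO$'' is not literally definitional if $\LO$ is read as $\inf\{\la:C_\la=-\infty\}$ — it needs either the monotonicity of $C_\La$ (recorded just before the lemma) or the aggregate observation that no $\La<\tLO$ belongs to $\{\la:C_\la=-\infty\}$, so that the infimum is $\ge\tLO$; either fix is immediate and uses exactly the ingredient the paper itself uses. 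As for embeddedness, the remark following Lemma \ref{scattomenoinfinito} already notes that this hypothesis can be dropped, so your bookkeeping concern is covered in either reading of the admissible class.
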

\begin{proof}
	By definition of $\LO$ there is $\la\in(\LO,\La)$ such that $C_\la=-\infty$. By monotonicity for all $\Si\con\bar{\Om}$ we have:
	\begin{displaymath}
	\mathcal{W}_\la(\Si)> \mathcal{W}_\La(\Si),
	\end{displaymath}
	so $C_\La=-\infty$.
\end{proof}

\noindent Now we are able to give a first result about compactness.
\begin{thm}[Compactness for $\La<\LO$] \label{thmexistenceforLa<LO}
	If $\La<\LO$ and if $\Si_n$ is minimizing for $\WL$, then $\Si_n$ converges (up to subsequence) to a varifold $V\in\VV_2(\bar{\Om})$ with generalized mean curvature $H_V\in L^2_{(\bar{\Om},\mu_V)}$.
\end{thm}
\begin{proof}
	Let us take a minimizing sequence $(\Si_n)$ such that $\WL(\Si_n)\le C_\La+\frac{1}{n}$. Suppose that $|\Si_n|\to\infty$. Let $\la\in(\La,\LO)$, then:
	\begin{displaymath}
	0\le C_\la\le \mathcal{W}_\la(\Si_n)=\WL(\Si_n)-(\la-\La)|\Si_n| \to -\infty,
	\end{displaymath}
	that is impossible. So we have that there exists $L$ such that $|\Si_n|\le L$ for all $n$, and being $(\Si_n)$ a minimizing sequence then there also exists $C_0$ such that $\WW(\Si_n)\le C_0$ for all $n$. Moreover, denoting by $H_n$ the mean curvature vector of $\Si_n$ and calling again $\Si_n$ the varifold associated to $\Si_n$, for all $W\con \con \Om_\ep:=\ep$-neighborhood of $\Om$ we have (see Appendix A):
	\begin{displaymath}
	\begin{split}
	||\de  \Si_n||(W)&=\sup_{|X|\le1,\,\,\supp(X)\con W}\bigg| \de\Si_n(X)  \bigg|=\sup_{|X|\le1,\,\,\supp(X)\con W}\bigg| \int _{\Om_\ep}  \lgl X,H_n\rgl\, d\mu_{\Si_n} \bigg| \\
	&\le \sqrt{LC_0} \qquad\forall n.
	\end{split}
	\end{displaymath}
	So by compactness of varifolds (\cite{Al}, Appendix A) we get the existence of a limit $V\in\VV_2(\bar{\Om})$ of a subsequence $(\Si_{n_k})$ in the sense of varifolds. By lower semicontinuity we have that $V$ has mean curvature $H_V\in L^2_{(\bar{\Om},\mu_V)}$.
\end{proof}

\begin{remark}
	From the proof of Theorem \ref{thmexistenceforLa<LO} it is useful to remember that if a sequence $(\Si_n)$ of uniformly bounded surfaces has both uniformly bounded Willmore energy $\WW(\Si_n)$ and area $|\Si_n|$, then such sequence is precompact with respect to varifold convergence.
\end{remark}

\noindent Combining the information collected up to now we have the following consequence.
\begin{cor} The number $\LO$ is contained in the interval $[4,\frac{1}{\ep_\Om^2}]$.
\end{cor}
\begin{proof}
	First recall that we already observed that $\LO\le\frac{1}{\ep_\Om^2}$ (observe that $\ep_\Om\le\frac{1}{2}$ since $\Om\con B_{1/2}$). Now let us blow up $\Om$ by a factor 2; we get $2\Om\con B_1$ and $\WL(\Si)\ge|\Si|(1-\La)\ge0$ for all $\Si\con2\Om$ and for all $\La\le1$ by Lemma \ref{thmwillmorevsareainequality}. Thus we get $\La_{2\Om}\ge 1$, then rescaling back to $\Om$ we get $\LO\ge \frac{1}{(1/2)^2}=4$.
\end{proof}

\noindent Without further assumptions on $\Om$ we will see that we are not able to identify $\LO$ among its possible values (Example \ref{valoriLO}). We need some further results first.
\begin{remark}\label{remmasse}
	Let us consider two parameters $\La,(\La+\ep) \in (\La,\LO)$, and call $(\Si_n^\La)$ and $(\Si_n^{\La+\ep})$ two corresponding minimizing sequences. By the proof of Theorem \ref{thmexistenceforLa<LO} we know that areas $|\Si^\La_n|, |\Si^{\La+\ep}_n|$ are uniformly bounded; assume that there exist the limits of the sequences of their areas. It holds that:
	\begin{equation} \label{eqdecrescenzamasse}
	\lim_n|\Si_n^{\La}|\le \lim_n |\Si_n^{\La+\ep}|<+\infty,
	\end{equation}
	\begin{equation} \label{eqdecrescenzamasse2}
	\lim_n |\Si_n^{\La+\ep}|\le \lim_n \frac{\La-4}{\La} |\Si^\La_n|+\frac{1}{\La}\WW(\Si_n^{\La+\ep}),
	\end{equation}
	\begin{equation} \label{eqdecadimento}
	C_{\La+\ep}=\lim_n \mc{W}_{\La+\ep}(\Si_n^{\La+\ep})\le \lim_n \WW_\La(\Si_n^{\La})-\ep|\Si_n^{\La}|<C_\La.
	\end{equation}
	Let us prove such inequalities separately. First we have:
	\begin{displaymath}
	\begin{split}
	\lim_n\mathcal{W}_{\La+\ep}(\Si_n^{\La+\ep})&\le \lim_n \mathcal{W}_{\La+\ep}(\Si_n^{\La})=\lim_n\WL(\Si_n^{\La})-\ep |\Si_n^{\La}|+ \ep |\Si_n^{\La+\ep}|- \ep |\Si_n^{\La+\ep}|\\&\le\lim_n\WL(\Si_n^{\La+\ep})-\ep |\Si_n^{\La}|+ \ep |\Si_n^{\La+\ep}| - \ep |\Si_n^{\La+\ep}|\\&= \lim_n \mathcal{W}_{\La+\ep}(\Si_n^{\La+\ep})+\ep\lim_n |\Si_n^{\La+\ep}|-|\Si_n^{\La}|.
	\end{split}
	\end{displaymath}
	Since $|\Si^\La_n|, |\Si^{\La+\ep}_n|$ are uniformly bounded we get \eqref{eqdecrescenzamasse}. Moreover from $\lim_n \WW_\La(\Si^\La_n)\le \lim_n \WW_\La (\Si^{\La+\ep}_n)$ we get
	\begin{displaymath}
	\begin{split}
	\La\lim_n |\Si_n^{\La+\ep}|&\le \lim_n \WW(\Si_n^{\La+\ep})-\WW(\Si_n^{\La})+\La|\Si_n^{\La}|\le \lim_n \WW(\Si_n^{\La+\ep})+(\La-4)|\Si_n^{\La}|,
	\end{split}
	\end{displaymath}
	where in the second inequality we used Remark \ref{rmkwillmorevsarea2}. Hence we got \eqref{eqdecrescenzamasse2}.\\
	Finally
	\begin{displaymath}
	\lim_n\WW_{\La+\ep}(\Si_n^{\La+\ep})\le\lim_n  \WW_{\La+\ep}(\Si_n^{\La})=\lim_n \WW_\La(\Si_n^{\La})-\ep|\Si_n^{\La}|,
	\end{displaymath}
	that will give \eqref{eqdecadimento} once we prove that $|\Si_n^{\La}|\ge\de>0$ for all $n$. But in fact if $S$ is any sphere contained in $\Om$ we have that $\WL(S)<4\pi$ and thus $C_\La<4\pi$; therefore if by contradiction $\lim_n |\Si^\La_n|=0$ we would have $C_\La=\lim_n \WL(\Si^\La_n)\ge 4\pi$ that gives a contradiction.
\end{remark}

\noindent Now we are able to complete the characterization of the infima $C_\La$. recall that
\[
\LO:=\inf\{\La>0: C_\La =-\infty \}.
\]

\begin{thm}[Properties of $C_\La$] \label{thmbehaviour}
	The function $C_\La:\R_{>0}\to [-\infty,+\infty)$ that associates to the parameter $\La$ the corresponding infimum $C_\La$ has the following properties:
	\begin{enumerate} [label={\normalfont(\roman*)}]
	\item $C_0:=\lim_{\La\to0}C_\La = 4\pi$ independently of $\Om$, and $C_\LO\ge 0$,
	\item for $\La\in(0,\LO]$ the function $C_\La$ is continuous, nonnegative, concave and strictly decreasing. Moreover for all $\ep<\LO$ there exists $\de=\de(\ep)<0$ such that the derivative $C_\La'\le\de<0$ for almost all $\La\in(\LO-\ep,\LO]$ (i.e. where it exists), and $\de(\ep)$ cannot decrease as $\ep$ decreases,
	\item $C_\La=-\infty$ for each $\La>\LO$.
	\end{enumerate}
\end{thm}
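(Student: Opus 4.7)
The plan is to prove the four claims in the order (iv), (i), (ii), (iii), relying on the tools already built. Claim (iv) is exactly Lemma \ref{lemmaref}. For (i), the upper bound $\limsup_{\La\to 0}C_\La\le 4\pi$ follows by taking any inscribed sphere $S^2_r\con\bar\Om$ and noting $\WL(S^2_r)=4\pi(1-\La r^2)\to 4\pi$. The lower bound $\liminf_{\La\to 0}C_\La\ge 4\pi$ uses Remark \ref{rmkwillmorevsarea2} together with the universal Willmore inequality $\WW(\Si)\ge 4\pi$: indeed $\WL(\Si)\ge\WW(\Si)(1-4\La/7)\ge 4\pi(1-4\La/7)\to 4\pi$. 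Both bounds are independent of $\Om$.

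For (ii), concavity is immediate from $C_\La=\inf_\Si(\WW(\Si)-\La|\Si|)$ being an infimum of affine functions of $\La$; this already yields continuity on the open interval $(0,\LO)$, while monotonicity and non-negativity on $(0,\LO)$ are recorded in Remark 4.1 and in the Corollary following Theorem \ref{thmexistenceforLa<LO}. Strict decrease and the derivative bound come from inequality \eqref{eqdecadimento} of Remark \ref{remmasse}, once a uniform positive lower bound on $|\Si_n^\La|$ is in hand. I obtain it as follows: fixing an inscribed sphere $S^2_r\con\bar\Om$ gives $C_\La\le 4\pi-4\pi\La r^2<4\pi$, so any minimising subsequence with $|\Si_{n_k}^\La|\to 0$ would satisfy $\liminf \WL(\Si_{n_k}^\La)\ge 4\pi>C_\La$, a contradiction. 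Quantitatively $\liminf_n|\Si_n^\La|\ge (4\pi-C_\La)/\La\ge 4\pi r^2 =: c_0$, a constant independent of $\La$. Plugging $c_0$ into \eqref{eqdecadimento} yields $C_{\La+\eta}\le C_\La-\eta c_0$, hence strict decrease and $C'_\La\le -c_0$ almost everywhere. Taking $\de(\ep):=-c_0$ provides a bound that does not depend on $\ep$, so \emph{a fortiori} it does not need to decrease when $\ep$ shrinks.

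For (iii) I first upgrade continuity of $C_\La$ to the endpoint $\LO$. Upper semicontinuity at $\LO$ is automatic from the infimum-of-affine representation, while for every $\Si$ and every $\La<\LO$ the pointwise estimate $\mc{W}_\LO(\Si)=\WL(\Si)-(\LO-\La)|\Si|\le\WL(\Si)$ passes to infima, giving $C_\LO\le C_\La$ for each $\La<\LO$ and hence $C_\LO\le L_0:=\lim_{\La\to\LO^-}C_\La$; combined with USC this forces $C_\LO=L_0\ge 0$. Next, I identify $\LO$ with the quantity $\tLO$ of \eqref{definitions}: non-negativity of $C_\La$ on $(0,\LO)$ gives $\W(\Si)\ge\La$ for every $\Si$, so $\tLO\ge\LO$, while any $\Si$ with $\W(\Si)<\La$ satisfies $\WL(\Si)<0$ and forces $C_\La=-\infty$ via Lemma \ref{scattomenoinfinito}, giving $\LO\le\tLO$. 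Choosing $\Si_n$ with $\W(\Si_n)\to\LO$, one has $\mc{W}_\LO(\Si_n)=|\Si_n|(\W(\Si_n)-\LO)$, which vanishes in the limit as soon as $|\Si_n|$ stays bounded.

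The delicate step, and the main obstacle of the proof, is to rule out an area blow-up along this last sequence. To handle it I would apply Theorem \ref{thmexistenceforLa<LO} along $\La_k\uparrow\LO$: each minimising sequence for $\mc{W}_{\La_k}$ produces a varifold limit $V_k\in\VV_2(\bar\Om)$ with $H_{V_k}\in L^2$. Writing $\mc{W}_\LO(V_k)=\mc{W}_{\La_k}(V_k)-(\LO-\La_k)\bM(V_k)\le C_{\La_k}-(\LO-\La_k)\bM(V_k)$ and combining with the continuity $C_{\La_k}\to C_\LO$ just established, a diagonal and varifold-compactness argument together with the semicontinuity of Remark \ref{remsemicontinuity} should deliver a limit varifold $V$ with $\mc{W}_\LO(V)\le 0$, which upon approximation of $V$ by a sequence of smooth surfaces gives $C_\LO\le 0$ and completes the proof. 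The subtle point is precisely controlling the mass $\bM(V_k)$ as $\La_k\uparrow\LO$: Theorem \ref{thmexistenceforLa<LO} does not directly provide compactness at the threshold and the estimates of Remark \ref{remmasse} control masses only strictly below $\LO$, so some extra ingredient — presumably exploiting that $(\LO-\La_k)\bM(V_k)$ is tied to $C_{\La_k}-C_\LO\to 0$ — is needed to make the passage rigorous.
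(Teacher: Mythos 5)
Most of your proposal is sound, and in places cleaner than the paper's own argument: observing that $C_\La$ is an infimum of affine functions of $\La$ gives concavity, hence continuity on the interior of the finiteness interval, and upper semicontinuity at $\LO$, which combined with monotonicity yields left-continuity at $\LO$ without the paper's double-limit/uniform-convergence swap. Likewise your uniform lower bound $\liminf_n|\Si_n^\La|\ge 4\pi r^2$ (from $\WW\ge 4\pi$ and $C_\La\le \WL(S^2_r)$), fed into \eqref{eqdecadimento}, gives the strict decrease and the derivative bound with a $\de$ independent of $\ep$, which is consistent with (and stronger than) the statement. Points (i) and (iv) are fine.

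The genuine gap is exactly where you flag it, in (iii): you never rule out area blow-up along a sequence with $\W(\Si_n)\to\tLO=\LO$, and the fallback you sketch does not close it. Writing $\mc{W}_\LO(\Si_n^{\La_k})=\mc{W}_{\La_k}(\Si_n^{\La_k})-(\LO-\La_k)|\Si_n^{\La_k}|$ and using $C_\LO\le \liminf_n \mc{W}_\LO(\Si_n^{\La_k})$ only yields $(\LO-\La_k)\lim_n|\Si_n^{\La_k}|\le C_{\La_k}-C_\LO\to 0$, which is compatible with $\lim_n|\Si_n^{\La_k}|\to\infty$ and gives no upper bound on $C_\LO$; moreover $\mc{W}_\LO(V)\le 0$ for a limit varifold $V$ does not bound $C_\LO$ (an infimum over smooth surfaces) without an approximation step you do not supply. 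The paper's missing ingredient is a spherical-inversion argument: translating so that $\bar\Om\con \bar B_{1+r_0}\sm B_{r_0}$, setting $\tS_n=I_0(\ro\Si_n)$ with $I_0$ the inversion about $S^2_{1+r_0}$, one gets $|\tS_n|\ge \ro^{-4}|\Si_n|$ while $\WW(\tS_n)\le\WW(\Si_n)$, hence for the specific weight $\La=r_0^2\ro^2/(4(1+r_0)^4)$, which is exactly the threshold parameter of the large ball $B_{(1+r_0)^2/r}$ containing all the $\tS_n$,
\begin{displaymath}
\WW_{\La}(\tS_n)\le |\Si_n|\Bigl(\W(\Si_n)-\tfrac{r_0^2}{4(1+r_0)^4\ro^2}\Bigr)\longrightarrow -\infty
\end{displaymath}
once $\ro$ is chosen small enough that the bracket is negative; this contradicts $C_{\La}\ge 0$ for that ball (Theorem \ref{P2inB1} after rescaling). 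This forces $|\Si_n|\le L$, whence $\mc{W}_\LO(\Si_n)=|\Si_n|(\W(\Si_n)-\LO)\to 0$ and $C_\LO\le 0$. Without this (or an equivalent) argument, point (iii) is not proved.
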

\begin{proof}
	For any $\Si$, since $|\Si|\le\WW(\Si)$ and $\WW(\Si)\ge4\pi$, we have:
	\begin{displaymath}
	\WL(\Si)\ge \WW(\Si)(1-\La)\ge  4\pi(1-\La) \longrightarrow 4\pi \qquad \La\to0,
	\end{displaymath}
	then $C_0\ge4\pi$. Now take $r$ sufficiently small such that $S^2_r\con\bar{\Om}$, then:
	\begin{displaymath}
	C_\La\le\WL(S^2_r)= 4\pi-4\pi r^2 \La\longrightarrow 4\pi \qquad \La\to0,
	\end{displaymath}
	thus $C_0\le 4\pi$, and we got (i). \\
	For $\La\in(0,\LO)$ we already know from Equation \eqref{eqdecadimento} that in this interval the function is positive and strictly decreasing, thus it is differentiable almost everywhere and it has at most a finite number of jump-type discontinuities.\\
	Now we have:
	\begin{displaymath}
	\forall\Si \qquad \mathcal{W}_{\LO}(\Si)=\lim_{\La\to\LO^-} \WL(\Si),
	\end{displaymath}
	thus:
	\begin{displaymath}
	C_{\LO}=\inf_{\Si\con\bar{\Om}}\mathcal{W}_{\LO}(\Si)=\inf_{\Si\con\bar{\Om}} \lim_{\La\to\LO^-} \WL(\Si) \ge \inf_{\Si\con\bar{\Om}} \lim_{\La\to\LO^-} C_\La \ge 0,
	\end{displaymath}
	since $C_\La\ge0$ for all $\La<\LO$.\\
	Now we can prove continuity from the left. Take $\La_0\in(0,\LO)$ and suppose by contradiction that there exists $\eta>0$ such that $\lim_{\La\to\La_0^-}C_\La\ge C_{\La_0}+\eta$. Calling $(\Si_n)$ a minimizing sequence for the functional $\mathcal{W}_{\La_0}$, we have:
	\begin{displaymath}
	\begin{split}
	C_{\La_0}=\lim_{n}\mathcal{W}_{\La_0}(\Si_n)=\lim_{n}\lim_{\La\to\La_0^-}\WL(\Si_n)\ge \lim_{\La\to\La_0^-} C_\La \ge C_{\La_0}+\eta,
	\end{split}
	\end{displaymath}
	that is impossible.\\
	We can also prove continuity from the right. Take $\La_0\in(0,\LO)$ and suppose by contradiction that there exists $\eta>0$ such that $\lim_{\La\to\La_0^+}C_\La\le C_{\La_0}-\eta$. Call $(\Si_n^\La)$ a minimizing sequence for the functional $\mathcal{W}_{\La}$. If $\La_1\in(\La_0,\La_\Om)$, using Equation \eqref{eqdecrescenzamasse} we know that $\lim_n |\Si^\La_n| \le \lim_n |\Si^{\La_1}_n|\le L_1$ for any $\La\in(\La_0,\La_1)$. Therefore
	\begin{displaymath}
	\begin{split}
	C_{\La_0}&\ge \eta +\lim_{\La\to\La_0^+}\lim_n  \WL(\Si_n)= \eta +\lim_{\La\to\La_0^+}\lim_n \mathcal{W}_{\La_0}(\Si^\La_n) + (\La_0-\La)|\Si^\La_n|  \\
	&\ge \eta +  C_{\La_0} +\lim_{\La\to\La_0^+}(\La_0-\La)\lim_n |\Si_n^\La|  \\ 
	&\ge  \eta +  C_{\La_0} +\lim_{\La\to\La_0^+}(\La_0-\La)\lim_n |\Si_n^{\La_1}|\\
	&\ge  \eta +  C_{\La_0} +L_1\lim_{\La\to\La_0^+}(\La_0-\La)= \eta + C_{\La_0},
	\end{split}
	\end{displaymath}
	for some constant $L_1\ge \lim_n |\Si_n^{\La_1}| $, but that is impossible.\\
	We can also check continuity from the left in $\LO$. In fact let us consider a sequence $\La_n\to\LO^-$, then the functions $\WW_{\La_n}:\V:=\{V\in\VV_2(\bar{\Om}) : \exists H_V\in L^2_{(\bar{\Om},\mu_V)}  \}\to \R$ converge uniformly to the function $\WW_{\LO}:\V\to\R$ with respect to the $\bF$-metric of $\VV_2(\bar{\Om})$ on bounded sets (i.e. bounded in mass), that is:
	\begin{displaymath}
	\sup_{V\in\V,\,\,\bM(V)\le K}|\WW_{\La_n}(V)-\WW_{\LO}(V)|=	\sup_{V\in\V,\,\,\bM(V)\le K}|(\La_n-\LO)\bM(V)|\longrightarrow 0 \qquad n\to\infty,
	\end{displaymath}
	for all $K>0$. Hence we can swap the limit with the infimum in the following relation.
	\begin{displaymath}
	\begin{split}
	\lim_{\La_n\to\LO^-} \inf_{\Si\con\bar{\Om},\,\,|\Si|\le K} \WW_{\La_n}(\Si)=\inf_{\Si\con\bar{\Om},\,\,|\Si|\le K} \lim_{\La_n\to\LO^-} \WW_{\La_n}(\Si)=\inf_{\Si\con\bar{\Om},\,\,|\Si|\le K} \WW_{\LO}(\Si),
	\end{split}
	\end{displaymath}
	for all $K\ge0$. Hence:
	\begin{equation}\label{A}
	\lim_{K\to\infty}\lim_{\La_n\to\LO^-} \inf_{\Si\con\bar{\Om},\,\,|\Si|\le K} \WW_{\La_n}(\Si)=C_\LO.
	\end{equation}
	If we are able to swap the first two limits in \eqref{A}, we are done. Let
	\[
	C_{K,n}:= \inf_{\Si\con\bar{\Om},\,\,|\Si|\le K} \WW_{\La_n}(\Si) \ge 0
	\]
	Since $C_{K,n}$ is decreasing in the two indexes and the numbers $C_{K,n}$ are greater than or equal of zero we have
	\[
	\inf_K C_{k,n} =\lim_{K\to\infty} C_{K,n} \le \lim_{K\to\infty} C_{K,m} = \inf_K C_{K,m} \qquad\forall\,n>m,
	\]
	and then
	\[
	\inf_n \inf_K = \lim_n \inf_K C_{K,n} = \lim_n \lim_K C_{K,n}.
	\]
	Similarly we get
	\[
	\inf_K \inf_n C_{K,n} = \lim_K \lim_n C_{K,n},
	\]
	and thus
	\[
	\lim_K \lim_n C_{K,n} = \lim_n \lim_K C_{K,n}= \inf_{K,n} C_{K,n}.
	\]
	Using \eqref{A} we conclude that
	\begin{displaymath}
	C_\LO=\lim_{K\to\infty}\lim_{\La_n\to\LO^-} C_{K,n}=\lim_{\La_n\to\LO^-}\lim_{K\to\infty}C_{K,n}=\lim_{\La_n\to\LO^-} C_\La.
	\end{displaymath}
	Now using Equation \eqref{eqdecadimento} and reminding that $C_\La$ is differentiable for almost all $\La$, we see that for almost all $\La$ the function $f_\La:\ep\mapsto C_{\La+\ep}$ is such that $f_\La'(0)\le-\lim_n|\Si^\La_n|<0$ for almost all $\ep$ for which $f_\La$ is defined. Using now Equation \eqref{eqdecrescenzamasse}, we get $f'_{\La+\ep}(0)\le f'_\La(0)<0$ for almost all $\ep$ for which the relation is defined. Again by Equation \eqref{eqdecrescenzamasse} we see that $\de$ cannot decrease as $\ep$ decreases, thus we have completed the proof of (ii). We already know that (iii) is true by Lemma \ref{lemmaref}, thus we completed the proof of the theorem.
\end{proof}

\noindent For the convenience of the reader let us recall the definitions:

\begin{equation}\label{definitions}
\W(\Si):=\frac{\WW(\Si)}{|\Si|},\qquad \tLO:=\inf\{ \W(\Si):\Si\con\bar{\Om} \}.
\end{equation}

\noindent We want to prove some results that motivate the connection between $\WW_\LO$ and $\W$.
	
	\begin{prop} \label{LO=tLO}
		It holds that
		\begin{equation}
		\LO=\tLO.
		\end{equation}
		Moreover
		\begin{enumerate} [label={\normalfont(\roman*)}]
			\item if a minimizing sequence $(\Si_n)$ for the functional $\W$ satisfies that $|\Si_n|\le L$ for any $n$, then it is also minimizing for the functional $\WW_\LO$ and $C_\LO=0$,
			\item if a minimizing sequence $(\Si_n)$ for the functional $\mathcal{W}_{\LO}$ satisfies that $|\Si_n|\le L$ for any $n$ and if $C_\LO=0$, then $\Sigma_n$ is also a minimizing for $\W$.
		\end{enumerate}
	\end{prop}
	\begin{proof}
		For all $\Si$ it holds:
		\begin{displaymath}
		\W(\Si)= \frac{\WW_{\LO}(\Si)}{|\Si|}+\LO\ge \frac{C_{\LO}}{|\Si|}+\LO\ge\LO,
		\end{displaymath}
		thus $\tLO\ge\LO$.\\
		Now let us take a sequence $\La_n\to \LO^+$ and surfaces $\Si_n$ such that $\mathcal{W}_{\La_n}(\Si_n)\le 0$ for all $n$. Then:
		\begin{displaymath}
		\W(\Si_n)\le\La_n\to\LO\le\tLO,
		\end{displaymath}
		thus $\LO=\tLO$. Now we prove the remaining two statements separately.\\
		(i) Since $|\Si_n|\le L$ for some constant $L$, and we have:
		\begin{displaymath} 
		0\le \WW_\LO(\Si_n)=|\Si_n|(\W(\Si_n)-\LO)\le L(\W(\Si_n)-\tLO)\longrightarrow0 \qquad n\to\infty.
		\end{displaymath}
		(ii) If $\Si_n$ is minimizing for $\WW_\LO$, then $|\Si_n|\ge\de>0$, otherwise $\WW_\LO(\Si_n)\to C_\LO\ge4\pi$, but $C_\LO=0$ by hypothesis. We have:
		\begin{displaymath}
		0\le \de(\W(\Si_n)-\tLO)\le |\Si_n|(\W(\Si_n)-\LO)=\WW_\LO(\Si_n)\longrightarrow0  \qquad n\to\infty.
		\end{displaymath}
		Hence $\W(\Si_n)\to\tLO$.
	\end{proof}

\noindent Let us say that a functional $F:\{\Si\con\bar{\Om}\}\to\R$ is coercive if there exists $L>0$ such that $\inf F(\Si) = \inf_{|\Si|\le L} F(\Si)$. With this definition we see that Proposition \ref{LO=tLO} shows that the fact that $C_\LO=0$ is strictly related to the coerciveness of $\W$. More precisely we can state the following.

\begin{cor}\label{cor:coercive}
	If $\WW_\LO$ is coercive and $C_\LO=0$, then $\W$ is coercive. In particular, if $\Om=B_1$ the functional $\W$ is coercive.
\end{cor}

\begin{proof}
	The proof immediately follows by item ii) in Proposition \ref{LO=tLO} and by Theorem \ref{P2inB1}.
\end{proof}

\noindent In the following examples we show that it is not possible in general to identify the value of $\LO$ in the interval $[4,1/\ep_\Om^2]$.
\begin{example} \label{valoriLO}Let us illustrate three examples.
	\begin{enumerate} [label={\normalfont(\roman*)}]
	\item A first simple example is $\Om=B_{1/2}\sm B_{1/2-\ep}$ with $0<\ep<1/2$, for which we have $4=\LO=\frac{1}{\ep_\Om^2}$.
	\item Now we construct an example in which $4<\LO=\frac{1}{\ep_\Om^2}$. Let us consider $1/4<r<1/2$, $\de=1/2-r<r$ and let $\Om=B_r\cup B_{\de/4}\big(\big(r+\frac{3}{4}\de,0,0\big)\big)$. In this case ${\rm diam}(\Om)=1$ but clearly $\ep_\Om=r<1/2$ and $\LO=\frac{1}{r^2}=\frac{1}{\ep_\Om^2}>4$.
	\item We can also construct an example in which $\LO<\frac{1}{\ep_\Om^2}$. Let us denote by $E_{a,c}=\{(x,y,z)\in\R^3: x^2/a^2+ y^2/a^2+ z^2/c^2=1\}$. Let us fix $c=1/2$ and $c-\eta<a<c$ for $\eta>0$ sufficiently small such that $\WW(E_{a,1/2})\le 10$ (this is possible since when $a=1/2$ we would obtain a sphere).\\
	Now we consider $\Om$ as the volume enclosed by $E_{a,1/2}$ except the volume enclosed by $\{p-\de\nu(p):p\in E_{a,1/2} \}$ with $\nu$ outer normal of $E_{a,1/2}$ and $\de<<1$. For $\de$ sufficiently small we get that $\ep_\Om=\de/2$ and:
	\begin{displaymath}
	\WL(E_{a,1/2})\le 10-\La|E_{a,1/2}|< 10-\La 4\pi a^{4/3}\frac{1}{2^{2/3}},
	\end{displaymath}
	where we used the isoperimetric inequality $(4\pi)^{1/3}3^{2/3} |A|^{2/3} \le |\pa A|$ for $A\con\R^3$. Finally we observe that for $\de$ sufficiently small there exists $\La<1/\de^2$ such that $\WL(E_{a,1/2})=0$. This implies $\LO< 1/\de^2=1/\ep_\Om^2$ as desired.
	\end{enumerate}
\end{example}

\noindent The following examples point out the strong dependence of the problems on the geometry of the domain as it is taken unbounded. The scenery seems to become somehow chaotic, in the sense that we did not find spontaneous hypotheses on an unbounded $\Om$ under which general conclusions can be derived.
\begin{example} \label{Om1P}
	Let us take:
	\begin{displaymath}
	\Om=\{ (x,y,z)\in\R^3| x^2+y^2< 1 \}.
	\end{displaymath}
	Let us consider a sequence of surfaces $\Si_n$ that is $C^1$-close to $\Si_n=S_1\cup C_n \cup S_2$ where $C_n$ is a cylinder or radius 1 and height $n$, while $S_1$ and $S_2$ are the two hemispheres of the standard $S^2$ translated in a way in which $\Si_n$ becomes an admissible surface. We can arrange:
	\begin{displaymath}
	\WL(\Si_n)\le4\pi(1-\La)+\bigg( \frac{1}{4}  -\La  \bigg)2\pi n+\de,
	\end{displaymath}
	for some $\de>0$, where the first term is the energy of the two hemispheres and the second one is due to the cylinder. Then $\WL(\Si)$ converges to $-\infty$ as $n$ increases if $\La>1/4$, so $\La_{\Om}\le 1/4$.\\
	Being $\Om$ unbounded we cannot use the results obtained above, and it is also interesting to notice that the direct method consisting of taking a minimizing sequence and proving its convergence in the sense of varifolds is no longer applicable, since in this case we apparently have no tools in order to uniformly estimate the area of the sequence.
\end{example}

\noindent Considering different unbounded domains $\Om$ the situation may degenerate, as shown in the next example.
\begin{example} \label{Om2P}
	Let us take:
	\begin{displaymath}
	\Om=\{ (x,y,z)\in\R^3: |z|<1 \}.
	\end{displaymath}
	In this case we will see that the problems become immediately trivial. Let us consider the sequence of surfaces $\Si_n$ $C^1$-close to $\si_n^1\cup \si_n^2\cup T_n$, where $\si_n^i$ are two discs of radius $n$ with center $(0,0,-1)$ or $(0,0,1)$ lying on the opposite sides of $\pa \Om$, and $T_n$ is the subset with positive Gaussian curvature of the torus given by the rotation of a circumference of radius $1$ at a distance $n$ from the axis $z$. We can estimate for some $\de>0$ that:
	\begin{displaymath}
	\WL(\Si_n)\le Cn-2\pi\La n^2+\de\longrightarrow-\infty \qquad n\to\infty,
	\end{displaymath}
	for all $\La>0$. So there is not a minimum for $\WL$ and the infimum of the problem is $-\infty$.
\end{example}

\noindent Let us conclude with a further example.

\begin{example}
	It is not true in general that the boundary $\pa \Om$ of a bounded convex domain is a minimizer for Problem $(P)_{\Om,\La}$ for all $\La\le\LO$. Take for example $\pa\Om$ $C^1$-close to $S_1\cup C_h\cup S_2$ where $S_1$ and $S_2$ are translations of the two hemispheres of the standard $S^2$ and $C_h$ is a cylinder of radius one and height $h$. $\Om$ is the bounded set with such boundary. We can arrange that:
	\begin{displaymath}
	\WW_\La(\pa \Om)\ge\WW_\La(S^2)+\bigg( \frac{1}{4} -\La \bigg)2\pi h-\de > \WW_\La(S^2),
	\end{displaymath}
	for some $\de>0$ for each $\La<\frac{1}{4}-\frac{\de}{2\pi h}$. Being $S^2\con\bar{\Om}$ we see that the boundary cannot be a minimizer for $\WW_\La$ for all $\La\le\LO$.
\end{example}

\section{Regularity}

In this section we prove statement $(iii)$ of Theorem A. We adopt the convention that if $L$ is a plane in $\R^3$, then we write $u=(u_1,u_2,u_3)\in C^r(\bar{A};L^\perp)$, where $A\con L$, if $u(x)\in L^\perp\,\forall x\in A$. In this case we write:
\begin{equation} \label{graph}
\mbox{{\rm graph} } u =\{ x+u(x)|x\in A \}.
\end{equation}
Let us first recall the two main tools that we will use in the proof.
\begin{lemma}[Graphical Decomposition, \cite{SiEX}]\label{graphdecomp}
	Let $\Si$ be a compact surface without boundary with $0\in \Si$. Then for any $\be>0$ there exists $\ep_0$ (independent of $\Si,\ro$) such that if $\ep\in(0,\ep_0]$, $|\Si\cap \bar{B}_\ro |\le \be \ro^2$ and $\int_{\Si\cap B_\ro} |A|\le\ep\ro$, then the following holds.\\
	There are disjoint closed sets $P_1,...,P_N\con\Si$ such that:
	\begin{displaymath}
	\sum_{j=1}^{N}\mbox{{\rm diam} }P_j\le C\ep^{1/2}\ro
	\end{displaymath}
	and
	\begin{displaymath}
	\Si\cap B_{\ro/2}\sm \bigg(\bigcup_{j=1}^N P_j\bigg)=\bigg(\bigcup_{i=1}^M \mbox{{\rm graph} }u_i\bigg)\cap B_{\ro/2},
	\end{displaymath}
	where $u_i\in C^\infty (\bar{A}_i;L_i^\perp)$, with $L_i$ plane, $A_i$ smooth bounded connected open in $L_i$ of the form $A_i=A_i^0\sm (\cup_k d_{i,k})$ where $A_i^0$ is simply connected and $d_{i,k}$ are closed disjoint discs in $L_i$ not intersecting $\pa A_i^0$ and also $\sum_{i,k} {\rm diam}(d_{i,k})\le C\ep^{1/2}\ro$, $\sum_{i,k} |d_{i,k}|\le C\ep \rho^2$.\\
	Moreover $\mbox{{\rm graph} }u_i$ is connected and:
	\begin{displaymath}
	\sup_{A_i} \frac{|u_i|}{\ro}+\sup_{A_i} |Du_i|\le C\ep^{1/6}.
	\end{displaymath}
	If we also have $\int_{B_\ro}|A|^2\le \ep^2$, then in addition to the above conclusions it holds that for every $\si\in(\ro/4,\ro/2)$ such that $\pa B_\si$ intersects $\Si$ transversely and $\pa B_\si \cap (\cup_j P_j)=\emptyset$, we have:
	\begin{displaymath}
	\Si\cap \bar{B}_\si=\bigcup_{i=1}^M D_{\si,i},
	\end{displaymath}
	where each $D_{\si,i}$ is homeomorphic to a disc and $\mbox{{\rm graph} } u_i\cap \bar{B}_\si \con D_{\si,i}$. Also $ D_{\si,i}\sm \mbox{{\rm graph} } u_i$ is a union of a subcollection of the $P_j$ and each $P_j$ is homeomorphic to a disc.
\end{lemma}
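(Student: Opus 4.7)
The plan is to reduce to $\rho=1$ by scaling and then to decompose $\Sigma\cap B_{1/2}$ into graphical pieces over tangent planes, with a small-diameter leftover set $\bigcup_j P_j$ consisting of regions where the total curvature concentrates. The key tool is the elementary estimate that the angle between tangent planes at two points joined by a path $\gamma\subset\Sigma$ is controlled by $\int_\gamma |A|$ (the tangent plane is a parallel transport of the Gauss map), so that wherever the total curvature is small we automatically get quantitative flatness.

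After rescaling, first I would apply the monotonicity formula \eqref{equtile2} to ensure that the density $\Theta(\mu_\Sigma,x)$ is uniformly bounded on $B_{3/4}$ in terms of $\beta$. Using Sard's theorem and the coarea formula applied to $x\mapsto |x|$, I would select a radius $\sigma\in(1/4,1/2)$ such that $\partial B_\sigma$ meets $\Sigma$ transversely and such that both $\int_{\Sigma\cap\partial B_\sigma}|A|$ and the length of $\Sigma\cap\partial B_\sigma$ are controlled by their $L^1$ integrals. On each connected component of $\Sigma\cap B_\sigma$, I would then pick a base point, let $L_i$ be its tangent plane, and try to realise the component as a graph over $L_i$. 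The set of points at which this graphical property fails forms the bad set $\bigcup P_j$: concretely, these are the points $x\in\Sigma\cap B_{\sigma}$ such that the connected arc in $\Sigma$ joining $x$ to the base point accumulates tangent-plane rotation larger than a fixed threshold $\theta_0$. Each bad component $P_j$ must then contain a curve of length comparable to $\diam P_j$ along which $|A|$ integrates to at least $\theta_0$, giving
\begin{equation*}
\theta_0\sum_j \diam P_j \;\le\; C\int_{\bigcup_j P_j} |A|\,\sqrt{\#\{\text{components}\}}.
\end{equation*}
A Cauchy--Schwarz argument against the diameter-times-curvature estimate, combined with a bound on the number of bad components (itself controlled by the area bound $\beta$), produces the exponent $\epsilon^{1/2}$ in $\sum_j\diam P_j\le C\epsilon^{1/2}\rho$.

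Once the bad set is removed, the graphical pieces are obtained as follows. By the flatness estimate, each connected component of $\Sigma\cap B_\sigma\setminus\bigcup_j P_j$ has tangent planes within $C\epsilon$ of some plane $L_i$; projecting this component onto $L_i$ gives a local diffeomorphism onto an open set $A_i\subset L_i$, and the projection is in fact injective once $\epsilon$ is small enough (otherwise two points of the component would be joined by an arc rotating too much, contradicting the construction of $\bigcup_j P_j$). The resulting domain $A_i$ need not be simply connected: holes $d_{i,k}$ appear exactly where a bad component $P_j$ was removed, and the estimate on $\sum\diam d_{i,k}$ follows from that on $\sum\diam P_j$. Sup bounds on $|u_i|/\rho$ and $|Du_i|$ of order $\epsilon^{3/2}$ come from iterating the tangent-plane-rotation estimate and using that $u_i$ vanishes to first order at the base point.

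For the last statement under the stronger assumption $\int|A|^2\le\epsilon^2$, the strategy is topological: a Gauss--Bonnet argument applied on each piece $D_{\sigma,i}$ combined with the Michael--Simon Sobolev inequality gives a universal lower bound on $\int_{D_{\sigma,i}}|A|^2$ whenever $D_{\sigma,i}$ has positive genus or more than one boundary component. Taking $\epsilon$ small forces each $D_{\sigma,i}$ and each $P_j$ to be homeomorphic to a disc. The main obstacle I expect is the diameter bound with the sharp exponent $\epsilon^{1/2}$: naive estimates give only $\epsilon$, and recovering the half-power needs the interaction between the $L^1$ bound on $|A|$, the area density bound, and a careful combinatorial count of how many bad components can fit inside $B_\rho$ without accumulating too much area.
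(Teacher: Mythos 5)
First, a point of comparison: the paper does not prove this lemma at all --- it is imported verbatim from Simon \cite{SiEX} (Lemma 2.1), whose proof is a long and delicate argument. So your sketch is not competing with a proof in the paper but with Simon's original one, and as it stands it has genuine gaps rather than being an alternative route.

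The central gap is the passage from the hypothesis, which is an \emph{area} integral bound $\int_{\Sigma\cap B_\rho}|A|\,d\mathcal{H}^2\le\epsilon\rho$, to the \emph{line} integrals that control tangent-plane rotation. Your bad set is defined by path-wise rotation exceeding $\theta_0$, and you then assert that each bad component contains a curve of length comparable to its diameter along which $|A|$ integrates to at least $\theta_0$; but the $\mathcal{H}^2$ bound gives no control of $\int_\gamma|A|\,ds$ along any particular path (curvature can concentrate near isolated points or thin necks, which is exactly why the sets $P_j$ must appear), and the displayed inequality involving $\sqrt{\#\{\text{components}\}}$ is unjustified and dimensionally unmotivated. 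Simon's argument instead selects good slices $\Sigma\cap\partial B_\sigma$ by coarea/Chebyshev so that the slice curves have controlled length and $\int_{\Gamma_\sigma}|A|\,d\mathcal{H}^1\le C\epsilon^{1/2}$, identifies the $P_j$ as the pieces bounded by the short slice components, and obtains the diameter bound from estimates for surfaces with boundary; the exponent $1/2$ is the outcome of balancing the Chebyshev selection, and your closing worry that ``naive estimates give only $\epsilon$'' is backwards, since $\epsilon\le\epsilon^{1/2}$ would make the claimed bound trivial. Two further problems: the monotonicity formula \eqref{equtile2} cannot be used to bound densities in the first part of the lemma, because it needs $L^2$ control of $H$, which is not assumed there (only the area ratio bound $\beta$ and the $L^1$ curvature bound are available); and the topological step is not a ``universal lower bound on $\int_D|A|^2$ for non-discs'' --- a flat annulus has $\int|A|^2=0$ --- rather one must use Gauss--Bonnet together with the geodesic-curvature control of the boundary slice curves furnished by the already-established graph structure (the boundary circles lie on $\partial B_\sigma$ inside nearly flat graphs), and only this combination forces $\chi(D_{\sigma,i})=1$. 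In short, the overall architecture (good slices, pimples $P_j$, graphs over tangent planes, Gauss--Bonnet at the end) points in the right direction, but the steps that make Simon's lemma hard --- converting $L^1$ curvature smallness into graphical structure away from a set of total diameter $C\epsilon^{1/2}\rho$, and the selection arguments producing that exponent --- are precisely the ones left unproved here.
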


\begin{lemma}[Comparison, \cite{Schy}] \label{comparison}
	Let $L$ be a plane, $x_0\in L$, $u\in C^\infty(U;L^\perp)$ where $U\con L$ is an open neighborhood of $L\cap \pa B_\ro(x_0)$ and assume $|Du|\le C$ on $U$. Then there exists a function $w\in C^\infty(\overline{B_\ro(x_0)}\cap L;L^\perp)$ such that
	\begin{equation}
	\begin{split}
	&w=u, \qquad \pa_\nu w= \pa_\nu u \qquad\qquad\mbox{on }\pa B_\ro(x_0)\cap L,\\
	& \frac{\|w\|_{L^\infty(B_\ro(x_0)\cap L)}}{\rho}\le c(n) \bigg( \frac{\|u\|_{L^\infty(\pa B_\ro(x_0)\cap L)}}{\rho}
	+ \|Du\|_{L^\infty(\pa B_\ro(x_0)\cap L)} \bigg),\\
	&\|Dw\|_{L^\infty(B_\ro(x_0)\cap L)}\le c(n)\|Du\|_{L^\infty(\pa B_\ro(x_0)\cap L)},\\
	&\int_{(B_\ro(x_0)\cap L)} |D^2w|^2 \le c(n)\ro \int_{\mbox{{\rm graph} }(u|_{L\cap \pa B_\ro(\xi)})} |A|^2\,d\HH^1,
	\end{split}
	\end{equation}
	where $\pa_\nu$ denotes the normal outward derivative and $A$ is the second fundamental form of $\mbox{{\rm graph} }(u)$.
\end{lemma}


\noindent Now we can prove the regularity result. We will make use of arguments in \cite{SiEX}, so we will mainly focus on the differences that arise in our problem, namely the additional area term and the confinement in $\Om$. The feeling is that this method is very well applicable for functionals given by the sum of the Willmore energy and some lower order term.

\begin{thm}[Regularity]\label{thmregularity}
	If $\La>0$ is sufficiently small (depending on $\Om$), then there exists an embedded surface $\Si\con\bar{\Om}$ of class $C^{1,\al}\cap W^{2,2}$ such that $\WL(\Si)=C_\La$. Moreover the surface $\Si\cap \Om$ is of class $C^\infty$.
\end{thm}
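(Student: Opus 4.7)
The plan is to take a minimising sequence $\Si_n$ for $\WL$, extract a varifold limit $V$ via Theorem \ref{thmexistenceforLa<=LO}, and then upgrade $V$ from an abstract varifold to a $C^{1,\al}\cap W^{2,2}$ surface. Since $\bar\Om$ is compact, the varifold convergence $\Si_n\to V$ gives $\bM(\Si_n)\to\bM(V)$, and combined with the lower semicontinuity in Remark \ref{remsemicontinuity} this yields $\WL(V)\le C_\La$. Once $V$ is represented by an embedded $\Si\in C^{1,\al}\cap W^{2,2}$, that surface is admissible in the enlarged minimisation class and we automatically obtain $\WL(\Si)=C_\La$.

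For $\La$ sufficiently small, Theorem \ref{thmbehaviour}(i) forces $C_\La$ to lie arbitrarily close to $4\pi$, while the uniform area bound $\bM(\Si_n)\le L$ extracted in the proof of Theorem \ref{thmexistenceforLa<LO} gives $\WW(V)=\WL(V)+\La\bM(V)\le C_\La+\La L<8\pi$ once $\La$ is taken small enough. Remark \ref{remmoltvar} then forces $V$ to have multiplicity one $\mu_V$-almost everywhere; this is the decisive structural ingredient that unlocks the rest.

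With multiplicity one and the $L^2$ bound on the generalised mean curvature in hand, I would apply Simon's graphical decomposition Lemma \ref{graphdecomp} to the $\Si_n$ on small balls centred at each $x_0\in\supp(\mu_V)$. The monotonicity identity \eqref{equtile2} together with multiplicity one gives the upper bound $\bM(\Si_n\cap B_\ro(x_0))\le\be\ro^2$ for small $\ro$, while a blow-up/rigidity argument, exploiting that every tangent varifold of a multiplicity one integer rectifiable varifold with $L^2$ mean curvature is a plane, provides the smallness $\int_{\Si_n\cap B_\ro}|A|^2\le\ep^2$ along a suitable subsequence. Lemma \ref{graphdecomp} then produces graph functions $u_i^{(n)}$ over discs in the relevant tangent planes, and iterating the biharmonic comparison Lemma \ref{biharmcomp} on dyadic subballs yields uniform $C^{1,\al}$ H\"older estimates, so that $\supp(\mu_V)$ is locally a $C^{1,\al}\cap W^{2,2}$ graph. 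Inside $\Om$, the surface satisfies the Euler--Lagrange equation of $\WL$ (a fourth order quasilinear elliptic equation with $\La$-dependent lower order terms in the graph representation), and classical elliptic bootstrapping raises the regularity to $C^\infty$.

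The main technical obstacle is closing the gap between the $L^2$ control of the mean curvature $H_V$ and the local $L^2$ control of the full second fundamental form $A$ required by Lemma \ref{graphdecomp}, since no a priori genus bound on the $\Si_n$ is available and a direct use of Gauss--Bonnet fails. The resolution is Simon's inductive blow-up scheme: for a multiplicity one varifold with $L^2$ mean curvature, $\int_{B_\ro}|H|^2$ is automatically small on small scales, and this allows one to iterate the graphical representation together with the biharmonic comparison down to every scale, producing a uniform modulus of continuity for the gradient of the graph. A secondary delicate point is boundary contact: where $\supp(\mu_V)$ touches $\pa\Om$ the confinement acts as an obstacle constraint, so no regularity higher than $C^{1,\al}\cap W^{2,2}$ can be expected there, and these contact points must be handled by a dedicated obstacle-type argument compatible with the varifold limit.
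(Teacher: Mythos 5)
Your overall architecture (minimising sequence, varifold limit, Simon-style graphical decomposition plus biharmonic comparison, then elliptic bootstrap inside $\Om$) matches the paper, but the step you flag as ``the main technical obstacle'' is resolved incorrectly, and it is precisely where the smallness of $\La$ must enter. Lemma \ref{graphdecomp} requires smallness of $\int_{\Si_n\cap B_\ro}|A_n|$ and $\int_{\Si_n\cap B_\ro}|A_n|^2$ for the \emph{approximating surfaces}, and your proposed substitute --- that for a multiplicity one limit with $L^2$ mean curvature the quantity $\int_{B_\ro}|H|^2$ is small at small scales and that this ``allows one to iterate the graphical representation'' --- does not deliver this: $L^2$-smallness of $H_n$ gives no control whatsoever on $A_n$ (catenoidal necks or other minimal pieces have $H\equiv 0$ while $\int|A|^2$ stays bounded away from zero at arbitrarily small scales), and Allard-type blow-up/rigidity arguments are not available at the critical exponent $p=2$. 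You also assert that ``no a priori genus bound on the $\Si_n$ is available and a direct use of Gauss--Bonnet fails'', which is exactly backwards with respect to the actual mechanism: choosing $\La<4\pi/m(\Om)$, where $m(\Om)$ dominates the limit areas at parameter $\La$ by Remark \ref{remmasse}, forces $\WW(\Si_n)\le 8\pi-\de$; this gives embeddedness of the $\Si_n$ (Theorem \ref{thmlowerbound}), multiplicity one of the limit (Remark \ref{remmoltvar}), applicability of Lemma \ref{teta}, and --- via the large-genus limit result of Kuwert--Li--Sch\"atzle --- a uniform genus bound, so that after passing to a subsequence the genus is constant and Gauss--Bonnet yields a uniform bound on $\int_{\Si_n}|A_n|^2$. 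That bound is what makes the set of ``bad points'' finite and lets the graphical decomposition and the decay estimate run away from them; without it your scheme cannot even start.

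The second gap is your claim that once $V$ is represented by a $C^{1,\al}\cap W^{2,2}$ surface $\Si$ one ``automatically'' obtains $\WL(\Si)=C_\La$. Lower semicontinuity together with $\bM(\Si_n)\to\bM(V)$ only gives $\WL(\Si)\le C_\La$, and since $\Si$ is merely $W^{2,2}$ and confined in $\bar\Om$ you cannot simply mollify it within the constraint to get the reverse inequality. The paper devotes the entire second half of the proof to this point: Hausdorff convergence and connectedness of the limit, regularity and absence of energy concentration at the finitely many bad points, obtained by building comparison surfaces (gluing slightly deformed copies of $\Si_n\cap B_{\si_n}(\xi_i)$ into a fixed model surface, biharmonic replacement) and using the minimising property of $\Si_n$, which yields $|H_n|^2\HH^2\res\Si_n\to|H|^2\HH^2\res\Si$ as measures on all of $\R^3$, hence $\WW(\Si_n)\to\WW(\Si)$ and $\WL(\Si)=C_\La$, together with the decay estimate that gives regularity of $\Si$ at the bad points themselves. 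None of this appears in your proposal, and your final remark about obstacle-type boundary behaviour, while reasonable, does not substitute for it. A smaller issue: your inequality $\WW(V)\le C_\La+\La L<8\pi$ ``for $\La$ small'' needs the area bound $L$ to be controlled uniformly as $\La$ varies, which is exactly what the monotonicity of limit areas in Remark \ref{remmasse} provides and should be cited at that point.
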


\noindent Let us briefly illustrate the strategy of the proof of Theorem \ref{thmregularity}. We are going to consider a minimizing sequence $(\Si_n)$ for the functional $\WL$ with $\La<\LO$ converging to some varifold $V$. For $\La$ small enough we will have that $\WW(\Si_n)\le8\pi-\de$ and $V$ has multiplicity $1$. The analysis of the regularity of the support of $V$ is divided into two steps. We can distinguish finitely many points $\xi_1,...,\xi_P \in \supp V$, called bad points, that are points that can accumulate energy in the limit. First we will study the regularity at points $p\in \supp V \sm \{\xi_1,...,\xi_P\}$, in fact around such points we will be able to apply the Graphical Decomposition Lemma \ref{graphdecomp}. The graphical decomposition will be applied to any $\Si_n$ of the minimizing sequence around the same chosen point $p\in \supp V \sm \{\xi_1,...,\xi_P\}$; in such a way we will be able to replace controlled pieces of $\Si_n$ with comparison graphs given by Lemma \ref{comparison}. The minimizing property of the sequence $(\Si_n)$ thus yields inequalities by comparing $(\Si_n)$ with the modified sequence. This will lead to the decay inequality \eqref{stimapsi}, that readily implies $C^{1,\alpha}$ regularity around the good point $p$. Then a bootstrap argument based on the elliptic equation satisfied by critical points gives $C^\infty$ regularity of $\supp V\sm \{\xi_1,...,\xi_P\}$ inside $\Om$ and $C^{1,\alpha}\cap W^{2,2}$ of $\supp V\sm \{\xi_1,...,\xi_P\}$ in $\bar{\Om}$. The study of the regularity around a chosen bad point $\xi$ is similar in the spirit, but more careful. By uniform bounds one can identify around $\xi$ a ball $B_\tau(\xi)$ such that the convergence $\Si_n\to V$ is smooth outside such ball; controlling the oscillation of the tangent planes in suitable annular regions around $\Si_n\cap \pa B_\tau(\xi)$, we will replace part of $\Si_n\cap B_\tau(\xi)$ with suitable controlled disks, and we will argue again by comparison with the original minimizing sequence. This yields estimates completely analogous to the ones of the first case, and one derives regularity around the bad point as well.

\begin{proof}[Proof of Theorem \ref{thmregularity}]
	For $\La<\LO$, let us consider a minimizing sequence $(\Si_n)$ for $\WL$ converging in the sense of varifolds to $V\in\VV(\bar{\Om})$. For a given $\ep>0$, we say that a point $\xi\in\R^3$ is a \emph{bad point} if
	\begin{equation}
	\lim_{\ro\searrow0}\bigg( \liminf_{n\to+\infty}\int_{\Si_n\cap B_\ro(\xi)}|A_n|^2 \bigg)>\ep^2,
	\end{equation}
	where $A_n$ is the second fundamental form of $\Si_n$ and $|A_n|$ is its norm. If a point $\xi\in\R^3$ is not a bad point we then call it a \emph{good point}. Now we show that there is only a finite number of bad points.\\
	Fix  some $\bar{\La}<\LO$. We know from Remark \ref{remmasse} that there exists sequences $(\Si_n^\La)$ and $(\Si_n^{\bar{\La}})$ that are minimizing respectively for the parameters $\La$ and $\bar{\La}$ and they converge in the sense of varifolds, and then:
	\begin{displaymath}
	\lim_n |\Si_n^\La|\le\lim_n |\Si_n^{\bar{\La}}|=:m(\bar{\La}).
	\end{displaymath}
	In the following we choose $\La<\min\left\{\LO,\frac{4\pi}{m(\bar{\La})}\right\}$, so that $\La\lim_n |\Si_n^\La|<4\pi$. Hence, since $C_\La<4\pi$, we get that $\WW(\Si_n^\La)\le8\pi-\de$ for $n$ big enough and some $\de>0$. This implies that $\Si_n^\La$ is embedded by Theorem \ref{thmlowerbound} for $n$ big enough and that the genus of $\Si_n$ is bounded, in fact the minimum Willmore energy at genus $g$ is less then $8\pi$ and converges to $8\pi$ as $g\to\infty$ (see \cite{KuLiSc}).\\
	The above discussion has also another consequence: let $g_n$ be the genus of $\Si_n$, then $g_n\in\{0,1,...,\bar{g} \}$ for some $\bar{g}\in\N$ big enough. Hence there is a convergent subsequence $g_{n_j}$. This means that $g_{n_j}$ is constant for $j$ big enough. Then replacing $\Si_n$ with $\Si_{n_j}$ we get a minimizing sequence that has definitely constant genus. Hence we can assume without loss of generality that $\Si_n$ has fixed genus $g$ for all $n$. \\
	Another consequence is that, since by lower semicontinuity we have $\WW(V)<8\pi$, then $V$ has multiplicity $1$ $\mu_V$-almost everywhere by Remark \ref{remmoltvar}. \\
	We can apply Gauss-Bonnet Theorem to get:
	\begin{displaymath}
	\frac{1}{4}\int_ {\Si_n} |A_n|^2 =\WW(\Si_n)-\frac\pi2 (2-2g),
	\end{displaymath}
	with $g$ the genus of $\Si_n$ (the same for all $n$). Being $\Si_n$ minimizing, we have that $\int_ {\Si_n} |A_n|^2$ is bounded. So if $N$ is the number of bad points related to $\ep>0$, we get:
	\begin{displaymath}
	N\ep^2\le \liminf_n \int_ {\Si_n} |A_n|^2,
	\end{displaymath}
	giving an upper bound on $N$ in term of $\ep$.\\
	Moreover, by modifying the minimizing sequence with small perturbations, we can assume that $\Si_n\con\Om$ for every $n$ without loss of generality.\\
	The monotonicity formula (\cite{SiEX} Equation (1.3), or Appendix B in \cite{NoPo19}) implies that
	\begin{equation}\label{eq:StimaDensita}
	\frac{|\Si_n\cap B_\ro(p)|}{\ro^2}\le \frac{3}{2}\WW(\Si_n),
	\end{equation}
	for any $\ro>0$ and any $p\in \Si_n$. Hence we can take $\be=12\pi>\frac{3\WW(\Si_n)}{2}$ in Lemma \ref{graphdecomp} and let $\ep_0$ be the corresponding number given by such lemma.\\
	Let us fix an arbitrary $\ep\in(0,\ep_0)$; from now on we will call $\xi_1,...,\xi_P$ the bad points related to such $\ep$.\\
	For any $\xi\in supp(V)\sm\{\xi_1,...,\xi_P \}$ we can select $\ro(\xi,\ep)>0$ such that for all $\ro'\le\ro(\xi,\ep)$ we have $\int_{\Si_n\cap B_{\ro'}(\xi)}|A_n|^2\le \ep^2$ for infinitely many $n$; hence the last part of Lemma \ref{graphdecomp} is applicable to $\Si_n$ in $B_{\ro'}(\xi)$ for infinitely many $n$. Also by \eqref{eq:StimaDensita} we get that there exists $r\in(0,\ro']$ such that $|\pa B_r(p)\cap \Si_n|\le 3\be r$. Taking $\ro=\min\{\ro',r\}$ we can Lemma \ref{teta} for $n$ large enough with $\te$ small enough fixed (independent of $n,\ep,\xi$). We deduce that only one of the discs $D_j^{(n)}$, for example $D_1^{(n)}$, given by Lemma \ref{graphdecomp} can intersect the ball $B_{\te\ro}(\xi)$. Also, for infinitely many $n$ we know that there exist a plane $L_n$ containing $\xi$ and a $C^\infty(\bar{\Omega}_n)$ function $u_n:\bar{\Omega}_n\to L_n^\perp$ such that:
	\begin{equation} \label{ref}
	\begin{split}
	&\frac{|u_n|}{\ro}+|Du_n|\le C\ep^{1/6}, \\
	(\mbox{{\rm graph} }u_n &\cup_j P_{n,j})\cap B_\si(\xi)=D_1^{(n)}\cap B_\si(\xi),\\
	&\sum_j {\rm diam}(P_{n,j})\le C\ep^{1/2}\ro,
	\end{split}
	\end{equation}
	where each $P_{n,j}$ is diffeomorphic to a closed disc disjoint from $\mbox{{\rm graph} }(u_n|_{\Om_n})$ and $\si\in (\te\ro/2,\te\ro)$ is independent of $n$.\\	
	Now let us consider $C_\si(\xi):=\{ x+y|x\in B_\si(\xi)\cap L_n,y\in L_n^\perp \}$; by the Selection Principle \ref{selectionprinciple} there exists a set $T\con (\te\ro/2,\te\ro)$ of measure $\ge \te\ro/8$ such that for each $\si\in T$ we have $\pa C_\si(\xi)\cap P_{n,j}=\emptyset$ for infinitely many $n$. Hence for infinitely many $n$ we can apply Lemma \ref{comparison} on $D_1^{n}\cap B_\si(\xi)$ to get a function $w_n$ on $B_\si(\xi)\cap L_n$ such that:
	\begin{displaymath}
	\int_{L_n\cap B_\si(\xi)} |D^2w_n|^2\le C\si\int_{\Ga_n}|A_n|^2\,d\HH^1, 
	\end{displaymath}
	with $\Ga_n=\mbox{{\rm graph} }(w_n|_{L_n\cap \pa B_\si(\xi)})$ (the integration on subsets of planes $L_n$ is always understood with respect to the Lebesgue measure on such planes).\\
	Let $\tA_n$ be the second fundamental form of $\mbox{{\rm graph} }w_n$, in particular we have:
	\begin{displaymath}
	\int_{{\rm graph}(w_n)} |\tA_n|^2\le C\si\int_{\Ga_n}|A_n|^2\,d\HH^1.
	\end{displaymath}
	Note that by the estimates in Lemma \ref{comparison}, by choosing $\si$ sufficiently small (depending on $\xi$), we can assume $\mbox{{\rm graph} }w_n\con\bar{\Om}$. Then the $C^{1,1}$ surface $\tS_n:=(\Si_n\sm( D_1^{(n)}\cap B_\si(\xi)))\cup \mbox{{\rm graph} }w_n$ is such that $\WL(\Si_n)\le\WL(\tS_n)+\ep_n$ for some $\ep_n\searrow0$. Now we argue like in \cite{SiEX}, except that here we have to control the area term in the energy.\\
	Since $\Si_n$ has the same genus of $\tS_n$, by the Gauss-Bonnet Theorem we also get:
	\begin{displaymath}
	\ep_n+\int_ {\tS_n} \bigg(\frac{1}{4}|\tA_n|^2-\La\bigg) \ge+ \int_ {\Si_n} \bigg(\frac{1}{4}|A_n|^2-\La\bigg).
	\end{displaymath}
	So we have that:
	\begin{displaymath}
	\int_ {D_1^{(n)}\cap B_\si(\xi)} \frac{1}{4}|A_n|^2\le \ep_n+ \int_{ {\rm graph} (w_n)} \frac{1}{4}|\tA_n|^2 \,d\HH^2 +\La\bigg( \int_{D_1^{(n)}\cap B_\si(\xi)} d\HH^2-\int_{ {\rm graph} (w_n)} d\HH^2 \bigg) .
	\end{displaymath}
	Using Equations \eqref{ref}, let us estimate:
	\begin{displaymath}
	\begin{split}
	\int_{D_1^{(n)}\cap B_\si(\xi)} d\HH^2 &\le \int_ {\pi_{L_n}({\rm graph}(u_n)\cap B_\si(\xi)} \sqrt{1+|D u_n|^2}\,d\LL^2 +\sum_j |P_{n,j}|\\&\le \sqrt{1+c\ep^{1/3}}|\pi_{L_n}({\rm graph}(u_n)\cap B_\si(\xi)| +C\ro^2\le (\sqrt{1+c\ep^{1/3}}\pi\te^2+C)\ro^2=:\frac{1}{4}a\ro^2.
	\end{split}
	\end{displaymath}
	Hence:
	\begin{displaymath}
	\int_ {D_1^{(n)}\cap B_\si(\xi)} |A_n|^2\le 4\ep_n+a\ro^2+\int_{ {\rm graph} (w_n)} |\tA_n|^2 \le 4\ep_n+a\ro^2+C\si \int_{\Ga_n}|A_n|^2\,d\HH^1.
	\end{displaymath}
	That is:
	\begin{equation}\label{ref1}
	\int_{ \Si_n\cap B_\si(\xi)}|A_n|^2\le 4\ep_n+a\ro^2+C\si\int_{ \pa( D_1^{(n)}\cap B_\si(\xi)) } |A_n|^2\,d\HH^1.
	\end{equation}
	Since $\si$ was selected arbitrarily from the set $T$ of measure $\ge \te\ro/8$ in the interval $(\te\ro/2,\te\ro)$ we can arrange that:
	\begin{displaymath}
	\int_{ \pa( D_1^{(n)}\cap B_\si(\xi)) } |A_n|^2\,d\HH^1\le \frac{4}{\si}\int_ {\Si_n\cap B_{\te\ro}(\xi)\sm B_{\frac{\te\ro}{2}}(\xi)} |A_n|^2
	\end{displaymath}
	for infinitely many $n$. So using Equation \eqref{ref1}, for all $\ro\le\te\ro(\ep,\xi)$ we get:
	\begin{displaymath}
	\int_{ \Si_n\cap B_{\frac{\ro}{2}}(\xi)}|A_n|^2\le 4\ep_n +a\ro^2+C\int_ {   \Si_n\cap B_{\te\ro}(\xi)\sm B_{\frac{\te\ro}{2}}(\xi)       }  |A_n|^2.
	\end{displaymath}
	Adding $C$ times the left side we obtain:
	\begin{displaymath}
	\int_{ \Si_n\cap B_{\frac{\ro}{2}}(\xi)}|A_n|^2\le \ep_n+\al\ro^2+\ga \int_{ \Si_n\cap B_{\ro}(\xi)}|A_n|^2,
	\end{displaymath}
	where $\ga=\frac{C}{C+1}\in(0,1)$ and we named $\ep_n$ and $\al$ respectively the quantities $\frac{4}{C+1}\ep_n$ and $\frac{a}{C+1}$.\\
	Defining:
	\begin{equation}
	\psi(\ro,\xi):=\liminf_n \int_{\Si_n\cap B_\ro(\xi)}|A_n|^2,
	\end{equation}
	we get the following decay relation:
	\begin{equation} \label{decad}
	\psi\bigg(\frac{\ro}{2},\xi\bigg)\le \ga \psi(\ro,\xi) +\al\ro^2.
	\end{equation}
	Now let us observe that if $\xi_0\in supp(V)\sm\{\xi_1,...,\xi_P \}$, we can take:
	\begin{displaymath}
	\ro(\xi,\ep)=\frac{\ro(\xi_0,\ep)}{2}
	\end{displaymath} 
	for all $\xi\in supp(V)\cap B_{\frac{\ro(\xi_0,\ep)}{2}}(\xi_0)$. Hence, fixed $\xi_0\in supp(V)\sm\{\xi_1,...,\xi_P \}$, Equation \eqref{decad} holds for all $\xi\in supp(V)\cap B_{\frac{\ro(\xi_0,\ep)}{2}}(\xi_0)$ and for all $\ro\le\te\ro(\xi_0,\ep)/2:=\ro_0$. The constant $C$ defining $\ga$ is the one given by Lemma \ref{comparison}, so we can choose it arbitrarily big in order to get $\ga=\frac{C}{C+1}\in (1/2,1)$ and $\al=\frac{a}{C+1}\in(0,1/8)$. Hence given $\xi_0\in supp(V)\sm\{\xi_1,...,\xi_P \}$ we can apply Corollary \ref{cordecad} to get:
	\begin{equation} \label{stimapsi}
	\begin{split}
	&\psi(\ro,\xi)\le C\bigg(\frac{\ro}{\ro_0} \bigg)^\be \psi(\ro_0,\xi)\le C\bigg(\frac{\ro}{\ro_0} \bigg)^\be \psi(\ro(\xi_0,\ep),\xi_0) \\& \forall \xi\in supp(V)\cap B_{\frac{\ro(\xi_0,\ep)}{2}}(\xi_0),\,\,\forall \ro\le\ro_0:=\te\ro(\xi_0,\ep)/2,
	\end{split}
	\end{equation}
	for some $C>0,\be\in(0,1)$, where second inequality holds since $\psi(\ro_0,\xi)\le \psi(\ro(\xi_0,\ep),\xi_0) $. \\
	Hence we ultimately got the key decay relation on the second fundamental form (the same of Equation (3.2) in \cite{SiEX}).
	So following the same arguments in \cite{SiEX} (page 301) one gets that the varifold $V$ has a multiplicity $1$ tangent plane at each point $\xi\in supp(V)\cap B_\ro(\xi_0)$ with a normal vector $N(\xi)$ such that $\|N(\xi_1)-N(\xi_2)\|\le C|\xi_1-\xi_2|^\al$ for all admissible $\xi_1,\xi_2$. Also this means that if $U$ is a sufficiently small neighborhood of $\xi_0$, we have:
	\begin{displaymath}
	\mu_V\res U =\HH^2\res (\Si\cap U),
	\end{displaymath}
	where $\Si$ is a $C^{1,\al}$ surface.
	%
	Moreover from \eqref{stimapsi} one gets $\int_{ \Si\cap B_\ro(\xi)} H^2\le C\ro^\al$ for $\xi$ that is not a bad point, and this decay implies that $\Si$ is a $C^{1,\al}\cap W^{2,2}$ surface away from the bad points $\xi_1,...,\xi_P$.\\
	Now we improve the regularity of $\Si$ up to $C^\infty$ around points contained in $\Om$ and different from the bad ones. This will be one of the main differences with \cite{SiEX} in the sense that the following argument only applies for $\xi\in\Om$. Locally parametrizing the surface with a function $w\in C^{1,\al}\cap W^{2,2}$ as before, we have that $w$ is a critical point for the functional $\int |A|^2 -\La$ on the domain of $w$. This implies that the first variation of the functional calculated on $w$ vanishes, that is:
	\begin{displaymath}
	\de \bigg( \int_{{\rm graph}(w) } |A_w|^2-\La\,d\HH^2 \bigg)=\de \bigg( \int_{dmn(w)} \sum_{ i,j,r,s=1}^2 (1-h)g^{ij}g^{rs}w_{ir}w_{js}\sqrt{g} -\La\sqrt{g}  \bigg)=0,
	\end{displaymath}
	where $dmn(w)$ denotes the domain of $w$. This relation is equivalent to say that $w$ satisfies in the weak sense a fourth order partial differential equation of the form:
	\begin{equation}\label{eqellittica}
	D_jD_s(A^{ijrs}(x,w,Dw)D_iD_rw)+D_jC^j(x,w,Dw,D^2w)+B^0(x,w,Dw,D^2w)=0,
	\end{equation}
	where:
	\begin{displaymath}
	C^j=B^j+\tB^j,
	\end{displaymath}
	with $A^{ijrs},B^j,B^0$ the coefficients given by the first variation of the functional $\int |A|^2$ and $\tB^j$ the ones coming from the first variation of $-\La\int_{ dmn(w)} \sqrt{g}$. That is:
	\begin{displaymath}
	\tB^j(x,z,p,q)= \tB^j(p)=  \La\frac{p_j}{\sqrt{1+\sum_ip_i^2}}.
	\end{displaymath}
	We know by \cite{SiEX} (page 310) that the coefficients $A^{ijrs},B^j,B^0$ satisfy the hypotheses of Lemma \ref{elliptic}. By a simple calculation also the coefficients $\tB^j$ satisfy the same relations, then we can apply Lemma \ref{elliptic} to get $w\in C^{2,\al}$. Hence by a bootstrap argument on Equation \eqref{eqellittica} we conclude that $w\in C^\infty$.\\
	
	\noindent At this point we know that $supp(V)=\Si\sqcup\{\xi_1,...,\xi_P \}$, with $\Si$ that is a $C^{1,\al}\cap W^{2,2}$ surface (and $C^\infty$ in $\Om\sm \{\xi_1,...,\xi_P \}$). From now on we will rename $\Si$ the union $\Si\sqcup\{\xi_1,...,\xi_P \}$, so that $supp(V)=\Si$, keeping in mind that the regularity of $\Si$ is achieved away from the bad points.\\
	
	\noindent Since we chose $\La<\LO$ we know that $C_\La>0$ and thus we can assume that $\Si_n$ is connected for any $n$. Together with boundedness of the Willmore energy, this implies that the sets $\Si_n$ converge to $\Si$ in the Hausdorff distance $d_\HH$ (see \cite{SiEX} page 310-311, or Theorem 3.4 in \cite{NoPo19} for a detailed proof), and hence in particular we get that $\Si$ is connected.\\
	Now we are going to derive the regularity also in neighborhoods of the bad points.  
	By the very same arguments of \cite{SiEX}, pages 313-316, one can find distinct points $y_1,...,y_{M+P}\in\Si$ with $y_{M+i}=\xi_i$ for $i=1,...,P$ and radii $\tau_k$ for $k=1,...,M+P$ such that $\Si\con\bigcup_{k=1}^{M+P} B_{\tau_k}(y_k)$ and for each $k\neq l$ we have that $\pa B_{\tau_k}(y_k)\cap \Si$ and $\pa B_{\tau_l}(y_l)\cap \Si$ are either disjoint or intersect transversely and:
	\begin{displaymath}
	\pa B_{\tau_k}(y_k)\cap\pa B_{\tau_l}(y_l)\cap\pa B_{\tau_m}(y_m)\cap\Si=\emptyset
	\end{displaymath}
	for distinct $k,l,m$.
	Moreover the curves $\Ga_l:=\bigg( \Si\sm \bigg( \bigcup_{k=M+1}^{M+P} B_{\tau_k}(y_k) \bigg) \bigg)\cap \pa B_{\tau_l} (y_l)$ for $l=1,...,M+P$ divide $\Si\sm ( \cup_{k=M+1}^{M+P} B_{\tau_k}(y_k) )$ into polygonal regions $R_1,...,R_Q$. And letting for all $l=1,...,Q$
	\begin{displaymath}
	\RR_l:=\bigg\{ x+z|x\in R_l,z\in (T_x\Si)^\perp,|z|\le\te\frac{\de}{4}  \bigg\},
	\end{displaymath}
	for some $\de$, it turns out that $\Si_n\cap \RR_l$ is $C^{1,\al}$-diffeomorphic to $R_l$ and then
	\begin{displaymath}
	\Si_n\sm \bigg( \bigcup_{k=M+1}^{M+P} B_{\tau_k}(y_k) \bigg)
	\end{displaymath}
	is $C^{1,\al}$-diffeomorphic to $	\Si\sm \bigg( \bigcup_{k=M+1}^{M+P} B_{\tau_k}(y_k) \bigg)$ for $n$ big enough (up to subsequence).\\
	So we can now take surfaces $\tS_n$ such that:
	\begin{displaymath}
	\tS_n\sm \bigg( \bigcup_{k=M+1}^{M+P} B_{\tau_k}(y_k) \bigg),\qquad 	\Si_n\sm \bigg( \bigcup_{k=M+1}^{M+P} B_{\tau_k}(y_k) \bigg), \qquad 	\Si\sm \bigg( \bigcup_{k=M+1}^{M+P} B_{\tau_k}(y_k) \bigg)
	\end{displaymath}
	are $C^{1,\al}$-diffeomorphic for all $n$, and:
	\begin{equation}\label{ref15}
	\tS_n\cap V_k=\Si_n\cap V_k \qquad \forall k=M+1,...,M+P
	\end{equation}
	for some neighborhood $V_k$ such that $B_{\tau_k}(y_k)\con\con V_k \con\con B_{2\tau_k}(y_k)$, and:
	\begin{equation} \label{ref16}
	\tS_n\sm \bigg( \bigcup_{k=M+1}^{M+P} B_{2\tau_k}(y_k) \bigg)=	\Si\sm \bigg( \bigcup_{k=M+1}^{M+P} B_{2\tau_k}(y_k) \bigg) \qquad\forall n,
	\end{equation}
	and also:
	\begin{equation}\label{ref17}
	\int_{ \tS_n\cap (B_{2\tau_k}(y_k)\sm B_{\tau_k}(y_k)) } |\tA_n|^2 \le C\ep^2,
	\end{equation}
	where $\tA_n$ is the second fundamental form of $\tS_n$.\\
	By \eqref{ref15} and the minimizing property of $\Si_n$ we have:
	\begin{displaymath}
	\begin{split}
	\int_{ \Si_n \sm ( \cup_{k=M+1}^{M+P} B_{\tau_k}(y_k) ) } |H_n|^2 &\le \int_{ \tS_n \sm ( \cup_{k=M+1}^{M+P} B_{\tau_k}(y_k) )} |\tH_n|^2+\La |\Si_n\sm ( \cup_{k=M+1}^{M+P} B_{\tau_k}(y_k) )|+\\&\indent-\La|\tS_n\sm ( \cup_{k=M+1}^{M+P} B_{\tau_k}(y_k) )|+\ep_n,
	\end{split}
	\end{displaymath}
	with $\ep_n\to 0$. Then by \eqref{ref16} and \eqref{ref17} we obtain:
	\begin{displaymath}
	\begin{split}
	\int_{ \Si_n \sm ( \cup_{k=M+1}^{M+P} B_{\tau_k}(y_k) ) } |H_n|^2 &\le \int_{ \Si \sm ( \cup_{k=M+1}^{M+P} B_{2\tau_k}(y_k) )} |H|^2+\ep_n+C\ep^2+\\&\indent+\La\big( |\Si_n\sm ( \cup_{k=M+1}^{M+P} B_{\tau_k}(y_k) )|-|\tS_n\sm ( \cup_{k=M+1}^{M+P} B_{\tau_k}(y_k) )|\big).
	\end{split}
	\end{displaymath}
	Using the hypotheses on the surfaces $\tS_n$, let us estimate the quantity:
	\begin{displaymath}
	\begin{split}
	|\Si_n\sm ( \cup_{k=M+1}^{M+P} B_{\tau_k}(y_k) )|&-|\tS_n\sm ( \cup_{k=M+1}^{M+P} B_{\tau_k}(y_k) )|\\&\le
	|\Si_n \sm ( \cup_{k=M+1}^{M+P} B_{2\tau_k}(y_k) )|-|\Si \sm ( \cup_{k=M+1}^{M+P} B_{2\tau_k}(y_k) )|+\\
	&\indent+\sum_{k=M+1}^{M+P} |\Si_n\cap B_{2\tau_k}(y_k)\sm ( B_{\tau_k}(y_k)\cup V_k) |+\\&\indent- | \cup_{k=M+1}^{M+P}  \tS_n\cap B_{2\tau_k}(y_k)\sm ( B_{\tau_k}(y_k)\cup V_k)   |  \le\\&\le
	|\Si_n \sm ( \cup_{k=M+1}^{M+P} B_{2\tau_k}(y_k) )|-|\Si \sm ( \cup_{k=M+1}^{M+P} B_{2\tau_k}(y_k) )|+\\
	&\indent+\sum_{k=M+1}^{M+P} |\Si_n\cap B_{2\tau_k}(y_k)\sm ( B_{\tau_k}(y_k)\cup V_k) | \le\\&\le
	|\Si_n \sm ( \cup_{k=M+1}^{M+P} B_{2\tau_k}(y_k) )|-|\Si \sm ( \cup_{k=M+1}^{M+P} B_{2\tau_k}(y_k) )|+\\
	&\indent+\sum_{k=M+1}^{M+P}|\Si_n\cap B_{2\tau_k}(y_k)|.  	    
	\end{split}
	\end{displaymath}
	Since this is true for all $\ep>0$, we get:
	\begin{displaymath}
	\begin{split}
	\int_{ \Si_n \sm ( \cup_{k=M+1}^{M+P} B_{\tau_k}(y_k) ) } |H_n|^2 &\le  \int_{ \Si \sm ( \cup_{k=M+1}^{M+P} B_{2\tau_k}(y_k) )} |H|^2+\ep_n
	+\\&+\La  \bigg(  |\Si_n \sm ( \cup_{k=M+1}^{M+P} B_{2\tau_k}(y_k) )|-|\Si \sm ( \cup_{k=M+1}^{M+P} B_{2\tau_k}(y_k) )|+\\
	&+\sum_{k=M+1}^{M+P}|\Si_n\cap B_{2\tau_k}(y_k)|  \bigg).
	\end{split}
	\end{displaymath}
	Since we already know that by the convergence of varifolds we also have $|\Si_n|\to \bM(\Si)=|\Si|$, then we get:
	\begin{displaymath}
	\begin{split}
	\limsup_n & \int_{ \Si_n \sm ( \cup_{k=M+1}^{M+P} B_{\tau_k}(y_k) ) }  |H_n|^2 \le \int_{ \Si \sm ( \cup_{k=M+1}^{M+P} B_{2\tau_k}(y_k) )} |H|^2
	+\\&+\limsup_n \La  \bigg(  |\Si_n \sm ( \cup_{k=M+1}^{M+P} B_{2\tau_k}(y_k) )|-|\Si \sm ( \cup_{k=M+1}^{M+P} B_{2\tau_k}(y_k) )|+\\
	&+\sum_{k=M+1}^{M+P}|\Si_n\cap B_{2\tau_k}(y_k)|  \bigg)= \\&=
	\int_{ \Si \sm ( \cup_{k=M+1}^{M+P} B_{2\tau_k}(y_k) )} |H|^2
	+ \La \sum_{k=M+1}^{M+P} |\Si\cap B_{2\tau_k}(y_k)|.
	\end{split}
	\end{displaymath}
	Hence finally:
	\begin{equation}
	\lim_{\si\searrow0}\limsup_n \int_{ \Si_n \sm ( \cup_{k=M+1}^{M+P} B_{\si}(y_k) ) }  |H_n|^2 \le \lim_{\si\searrow0} \int_{ \Si \sm ( \cup_{k=M+1}^{M+P} B_{\si}(y_k) )} |H|^2=\int_{\Si}|H|^2.
	\end{equation}
	Combining this with the natural lower semicontinuity of the Willmore functional under varifold convergence, we establish that:
	\begin{equation}
	|H_n|^2\HH^2\res \Si_n \longrightarrow |H|^2\HH^2\res \Si
	\end{equation}
	as measures on the domain $\R^3\sm\{\xi_1,...,\xi_P \}$.\\
	Moreover, with the above notation, we have by the Gauss-Bonnet Theorem that:
	\begin{displaymath}
	\begin{split}
	\int_{ \tS_n \sm ( \cup_{k=M+1}^{M+P} B_{\tau_k}(y_k) )} &|\tH_n|^2 - \int_{ \Si_n \sm ( \cup_{k=M+1}^{M+P} B_{\tau_k}(y_k) )} |H_n|^2 \\&=\frac{1}{4} \bigg(
	\int_{ \tS_n \sm ( \cup_{k=M+1}^{M+P} B_{\tau_k}(y_k) )} |\tA_n|^2 - \int_{ \Si_n \sm ( \cup_{k=M+1}^{M+P} B_{\tau_k}(y_k) )} |A_n|^2\bigg), 
	\end{split}
	\end{displaymath}
	then same conclusions hold for the second fundamental form, that is:
	\begin{equation}
	\lim_{\si\searrow0}\limsup_n \int_{ \Si_n \sm ( \cup_{k=M+1}^{M+P} B_{\si}(y_k) ) }  |A_n|^2 \le \int_{\Si}|A|^2,
	\end{equation}
	and:
	\begin{equation}
	|A_n|^2\HH^2\res \Si_n \longrightarrow |A|^2\HH^2\res \Si
	\end{equation}
	as measures on the domain $\R^3\sm\{\xi_1,...,\xi_P \}$.\\
	Finally we prove the claimed regularity of the varifold $\Si$, that is regularity in the bad points. According to the above discussion, let us sum up some useful results. For each
	$\de>0$ sufficiently small there is $\si\in(\de/2,\de)$ such that:
	\begin{equation} \label{ref18}
	\limsup_n \int_{ \Si_n\cap (\cup_{i=1}^P B_{2\si}(\xi_i)\sm B_{\si}(\xi_i)) } |A_n|^2 \le \de^2,
	\end{equation}
	\begin{equation} \label{ref23}
	\Si_n\sm \bigg( \bigcup_{i=1}^P B_{\si}(\xi_i) \bigg) \mbox{ is $C^{1,\al}$-diffeomorphic to } \Si\sm \bigg( \bigcup_{i=1}^P B_{\si}(\xi_i) \bigg),
	\end{equation}
	\begin{equation}
	\bigg| \WW\bigg( \Si_n\sm \bigg( \bigcup_{i=1}^P B_{\si}(\xi_i) \bigg) \bigg) -\WW(\Si) \bigg| \le \de^2.
	\end{equation}
	In particular choosing appropriate
	$\de_n\searrow0$ and then $\si_n\in(\de_n/2,\de_n)$, for all $i=1,...,P$ we have:
	\begin{equation}\label{ref21}
	\lim_n \WW\bigg(\Si_n\sm\bigg( \bigcup_{i=1}^P B_{\si_n}(\xi_i) \bigg)\bigg)=\WW(\Si).
	\end{equation}
	By Equation \eqref{ref23} we have that for $\si$ small enough $\Si\cap  B_{2\si}(\xi_i)\sm B_{\si}(\xi_i)$ is $C^1$-close to an annulus $L_i\cap  B_{2\si}(\xi_i)\sm B_{\si}(\xi_i)$. Hence we can take a smooth compact surface $\tS$ such that, for suitable points $y_1,...,y_p\in\tS$ and sufficiently small $\si$, $\tS\sm (\cup_{i=1}^P B_\si(y_i))$ is $C^{1,\al}$-diffeomorphic to $\Si\sm (\cup_{i=1}^P B_\si(\xi_i))$ and such that, for $\si=\si_n$ as above small enough, it is possible to replace $\tS\cap B_{\si_n}(y_i)$ by a slight deformation of $\Si_n\cap B_{\si_n}(\xi_i)$ followed by a rigid motion to give $(\Si_n\cap B_{\si_n}(\xi_i))^*$ such that the surface
	\begin{displaymath}
	\tS_n:=\bigg( \tS \sm \bigg(\bigcup_{i=1}^P B_{\si_n}(y_i)\bigg) \bigg)\cup \bigg( \bigcup_{i=1}^P \bigg( \Si_n\cap B_{\si_n}(\xi_i) \bigg)^* \bigg)
	\end{displaymath}
	is $C^{1,\al}\cap W^{2,2}$ and
	\begin{equation}\label{ref20}
	\WL((\Si_n\cap B_{\si_n}(\xi_i))^*) \le \WL(\Si_n\cap B_{\si_n}(\xi_i)) +\ep_n \qquad \ep_n\searrow0.
	\end{equation}
	Using the minimizing property of $\Si_n$ and then \eqref{ref20}, we have:
	\begin{equation}\label{ref24}
	\begin{split}
	\WL(\Si_n)&=\WL\bigg( \Si_n \cap \bigg(\bigcup_i B_{\si_n}(\xi_i)\bigg) \bigg)+\WL\bigg( \Si_n \sm \bigg(\bigcup_i B_{\si_n}(\xi_i)\bigg) \bigg)\\
	&\le \WL(\tS_n)+\ep_n \\
	&\le \WL \bigg( \Si_n \cap \bigg(\bigcup_i B_{\si_n}(\xi_i)\bigg) \bigg)+\WL \bigg( \tS \sm \bigg(\bigcup_i B_{\si_n}(y_i)\bigg) \bigg) +(P+1)\ep_n.
	\end{split}
	\end{equation}
	Hence:
	\begin{displaymath}
	\WL\bigg( \Si_n \sm \bigg(\bigcup_i B_{\si_n}(\xi_i)\bigg) \bigg)\le \WL \bigg( \tS \sm \bigg(\bigcup_i B_{\si_n}(y_i)\bigg) \bigg) +(P+1)\ep_n,
	\end{displaymath}
	and by \eqref{ref21} we get:
	\begin{equation}\label{eqminimalita}
	\WL(\Si)\le\WL(\tS).
	\end{equation}
	Analogously, using Gauss-Bonnet Theorem on the first inequality in \eqref{ref24}, being $\Si_n$ and $\tS_n$ diffeomorphic, we find:
	\begin{equation} \label{ref22}
	\int_{\Si}\bigg( |A|^2-\La\bigg) \le \int_{ \tS}\bigg( |\tA|^2-\La\bigg).
	\end{equation}
	Constructing $\tS$ taking a small perturbation of $\Si$ (so that no bad points lie on $\pa\Om$) and replacing $\Si\sm B_\si(\xi_i)$ with the graph of the function given by Lemma \ref{comparison}, by \eqref{ref22} we get the estimate
	\begin{displaymath}
	\int_{ \Si \cap B_\ro (\xi_i)} |A|^2 \le c\ro^{\al}+\La(|\Si\cap B_\ro(\xi_i)|-|\tS\cap B_\ro(\xi_i)|)\le C\ro^\al,
	\end{displaymath}
	for sufficiently small $\ro$ for some $\al>0$. Hence actually:
	\begin{displaymath}
	\int_{ \Si\cap B_\ro(y) } |A|^2 \le C\ro^{\al}
	\end{displaymath}
	for $\ro$ small enough and for all $y\in\Si$, now bad points included. And by classical arguments similar to the ones applied above in the case of good points one can show that this imply that $\Si$ is a $C^{1,\al}\cap W^{2,2}$ surface globally (and $\Si\cap\Om$ is of class $C^\infty$). In particular, arguing by approximation, we have $\WL(\Si)\ge C_\La$ and then by lower semicontinuity $\WL(\Si)= C_\La$ and by the upper bound on the Willmore energy we also conclude that $\Si$ is embedded.
\end{proof}

\noindent We conclude this section with some observations on the proof of Theorem \ref{thmregularity}.
\begin{remark}[Smallness of $\La$]
	The fundamental hypothesis of Theorem \ref{thmregularity} is to take the weight $\La$ sufficiently small. Observe that if $C_\LO=0$ and if there exists a minimizing sequence $(\Si^\LO_n)$ for $\WW_\LO$ with equibounded areas $|\Si_n^\LO|$ (i.e. if $\W$ is coercive, by Corollary \ref{cor:coercive}), then in the proof of Theorem \ref{thmregularity} we can take
	\begin{equation} \label{Lapiccolo}
	\La< \frac{4\pi}{m(\LO)},
	\end{equation}
	with
	\begin{equation}
	m(\LO)=\lim_n |\Si_n^\LO|,
	\end{equation}
	This estimate is sufficient for completing the proof. Of course the value of $m(\LO)$ depends on the chosen sequence $\Si^\LO_n$. It is interesting to notice that in the case of $\Om=B_{\frac{1}{2}}$, where we know that the sphere $S_{\frac{1}{2}}$ of radius $1/2$ ia a minimizer for $\La_{B_{\frac12}}=4$, we have $m(4)=|S_{\frac{1}{2}}|=\pi$; hence the estimate \eqref{Lapiccolo} gives $\La<4$, that is precisely the critical parameter $\La_{B_{\frac{1}{2}}}=4$, so in this case the estimate is sharp, excluding only the limit case of $\WW_\LO$.\\
	It could be a future development to prove or disprove the convergence to an enough regular surface for greater parameters $\La$ and in particular for the critical value $\LO$, perhaps using the more modern theory of $\cite{RiVP}$.
\end{remark}
\begin{remark}[Regularity of the limit surface]
	We derived the existence of a globally $C^{1,\al}\cap W^{2,2}$ surface $\Si$ that it is actually $C^\infty$ inside $\Om$, so if we know that $\Si\con \Om$ then $\Si$ is actually a smooth surface and hence a classical solution of the Problem $(P)_{\Om,\La}$. Also, $\pa \Om$ is of class $C^2$ by hypothesis, so on each relatively open set $A\con( \Si\cap \pa \Om)$, the surface is actually $C^2$.\\
	However, we want to notice here that it is not obvious that $\Si$ is globally $C^2$. In fact the smoothness of the surface inside $\Om$ is obtained by the Elliptic Regularity Lemma \ref{elliptic} used on Equation \eqref{eqellittica}, that is an equation given by the first variation of a functional, so it is something like $\frac{d}{dt}F(w+t\vp)|_{t=0}=0$ for the appropriate functional $F$. While this calculation is possible inside $\Om$, on $\pa\Om$ this leads only to a variational inequality of the fourth order subject to an obstacle boundary condition (given by the boundary of $\Om$), for which the development of a regularity theory is quite more difficult. Very remarkable results are proved in \cite{CaFrOB}, where it is studied the variational inequality of the bilaplacian $\De^2$ subject to obstacle boundary conditions; here it is proved that in dimension 2 (that is also our case) the solution in $C^2$. Of course our case is different, since the elliptic operator is nonlinear (recall \eqref{eqellittica}), but it is likely that we could achieve the same conclusion, having then $\Si$ of class $C^2$ globally and $C^\infty$ inside $\Om$ (hence getting a classical minimizer for the variational problem). This can be another possible development of the work, having also an interest itself in the theory of regularity for elliptic problems.
\end{remark}

\appendix
\section{Appendix}

In the Appendix we collect the technical results used in the proof of Theorem \ref{thmregularity} and some basic facts about varifold theory.\\

\begin{lemma}[\cite{SiEX}]\label{teta}
	Let $\Si$ be a compact surface without boundary, let $B_\ro$ be an open ball such that $\pa B_\ro$ intersects $\Si$ transversely and $\Si\cap B_\ro$ contains disjoint subsets $\Si_1,\Si_2$ with $\Si_j\cap B_{\te\ro}\neq\emptyset$, $\pa \Si_j\con\pa B_\ro$ and $|\pa \Si_j|\le\be\ro$ for $j=1,2$, where $\te\in(0,\frac{1}{2})$ and $\be>0$. Then
	\begin{displaymath}
	\WW(\Si)\ge 8\pi-C\be\te,
	\end{displaymath}
	with $C$ independent of $\Si,\be,\te$.
\end{lemma}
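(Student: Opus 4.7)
The plan is to apply the Li--Yau--type monotonicity formula \eqref{equtile2} separately to each piece $\Si_j$, regarded as a surface with boundary $\pa\Si_j \subset \pa B_\ro$, and then to sum the two resulting inequalities using disjointness, which gives $\WW(\Si_1) + \WW(\Si_2) \le \WW(\Si)$. For $j=1,2$ pick $y_j \in \Si_j \cap B_{\te\ro}$. We may assume $y_j$ is a multiplicity-one point of $\Si$: otherwise Remark \ref{remmoltvar} already yields $\WW(\Si) \ge 8\pi$ and we are done.

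For each $j$ I would redo the derivation of Remark \ref{equtili} with the test field $\Phi(x) = (|X_j|_\si^{-2} - r^{-2})_+ X_j$, where $X_j = x - y_j$ and $r = (1+\te)\ro$ is chosen so that $\Si_j \con B_r(y_j)$ (since $|y_j|<\te\ro$ and $\Si_j \con \bar B_\ro$). The new feature, since $\Si_j$ carries a boundary, is that the first variation identity picks up an extra conormal term $\int_{\pa\Si_j}\lgl \Phi, \eta\rgl\,d\HH^1$ (with $\eta$ the outward conormal to $\pa\Si_j$ in $\Si_j$), and consequently, after letting $\si\searrow 0$, the monotonicity formula becomes
\begin{displaymath}
\pi + \int_{\Si_j}\Bigl|\tfrac{H}{2}+\tfrac{X_j^\perp}{|X_j|^2}\Bigr|^2 \;=\; \frac{|\Si_j|}{r^2} + \frac{1}{4}\WW(\Si_j) + \int_{\Si_j}\frac{\lgl X_j,H\rgl}{r^2} + E_j,
\end{displaymath}
where $E_j := -\tfrac12 \int_{\pa\Si_j}(|X_j|^{-2}-r^{-2})\lgl X_j,\eta\rgl\,d\HH^1$. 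The key geometric estimate controls $E_j$: on $\pa\Si_j \subset \pa B_\ro$ one has $(1-\te)\ro \le |X_j| \le (1+\te)\ro$, and with $r = (1+\te)\ro$ a direct computation gives
\begin{displaymath}
(|X_j|^{-2}-r^{-2})|X_j| \;=\; \frac{1}{|X_j|}\Bigl(1-\tfrac{|X_j|^2}{r^2}\Bigr) \;\le\; \frac{(1+\te)^2-(1-\te)^2}{(1-\te)(1+\te)^2\,\ro} \;\le\; \frac{C\te}{\ro};
\end{displaymath}
combining with $|\pa\Si_j|\le\be\ro$ yields $|E_j| \le C\be\te$, which is exactly the order of correction stipulated in the claim.

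What remains is to extract from the boundary-corrected identity a clean inequality of the form $\tfrac14 \WW(\Si_j) \ge \pi - C\be\te$, after which summing over $j=1,2$ and using disjointness of $\Si_1,\Si_2$ concludes. The main obstacle is absorbing the residual truncation contributions $\tfrac{|\Si_j|}{r^2}$ and $\int_{\Si_j}\tfrac{\lgl X_j,H\rgl}{r^2}$: I would control the cross term by Young's inequality $\bigl|\int_{\Si_j}\tfrac{\lgl X_j,H\rgl}{r^2}\bigr| \le \tfrac{1}{2}\bigl(\tfrac{|\Si_j|}{r^2}+\WW(\Si_j)\bigr)$ since $|X_j| \le r$ on $\Si_j$, then combine the two inequalities (for $j=1,2$) and discharge the area terms by a symmetric application of \eqref{equtile2} to the full closed surface $\Si$ at $y_1$ (or equivalently by dropping the nonnegative left-hand side integral and invoking the monotonicity of $\rho \mapsto |\Si \cap B_\rho|/\rho^2$, which is what allows the cancellation to go through at the chosen radius $r=(1+\te)\ro$). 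The delicate point is this last bookkeeping step, but it hinges only on the geometric configuration that both $y_1,y_2$ lie in the small ball $B_{\te\ro}$ and that both boundaries are short, so the error is exactly of order $\be\te$.
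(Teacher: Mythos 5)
The paper does not prove this lemma at all: it is imported verbatim as Lemma~1.4 of \cite{SiEX}, so your proposal has to be judged on its own. The setup you describe (first variation of $\Phi=(|X_j|_\si^{-2}-r^{-2})X_j$ on the piece $\Si_j$ with the extra conormal term, $r=(1+\te)\ro$, and the estimate $|E_j|\le C\be\te$ from $(1-\te)\ro\le|X_j|\le(1+\te)\ro$ on $\pa\Si_j$) is correct and is genuinely the right kind of computation. The problem is the step you yourself flag as ``delicate'': the target inequality $\tfrac14\WW(\Si_j)\ge\pi-C\be\te$ for each piece separately is \emph{false}, so no bookkeeping can produce it. Take $\Si_j$ a flat disc through the centre of $B_\ro$: it satisfies every hypothesis with $\be=2\pi$, yet $\WW(\Si_j)=0$, while $\pi-C\be\te>0$ once $\te$ is small. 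The reason is that the residual terms you hope to discharge are not small: by the first variation identity with $\Phi=X_j$ one has exactly
\begin{displaymath}
\frac{|\Si_j|}{r^2}+\int_{\Si_j}\frac{\lgl X_j,H\rgl}{r^2}=\frac{1}{2r^2}\int_{\pa\Si_j}\lgl X_j,\eta\rgl\,d\HH^1,
\end{displaymath}
and for the flat disc this equals $\pi(1+\te)^{-2}$ --- the entire main term. Neither of your proposed mechanisms closes this: $\ro\mapsto|\Si\cap B_\ro|/\ro^2$ is not monotone for a surface with merely bounded Willmore energy (only the corrected quantity in \eqref{equtile2} is), and applying \eqref{equtile2} to the full closed $\Si$ at $y_1$ gives an \emph{upper} bound for $\pi$ involving yet more area terms, not a cancellation of $|\Si_j|/r^2$. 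Your Young-inequality bound on the cross term likewise only reproduces \eqref{equtilefinale}, which is weaker than what is needed.

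The missing idea is that the lemma is a statement about the \emph{closed} surface $\Si$, and the energy that forces $\WW(\Si)\ge 8\pi$ is partly stored in how $\Si$ closes up outside $B_\ro$; it cannot be localised as $\WW(\Si_1)+\WW(\Si_2)$. The Li--Yau/Simon mechanism is to write, for the closed surface centred at $\xi_1\in\Si_1\cap B_{\te\ro}$, the exact identity $\tfrac14\WW(\Si)=\pi+\int_\Si\bigl|\tfrac H2+\tfrac{X_1^\perp}{|X_1|^2}\bigr|^2$ (the $\ro\to\infty$ limit of \eqref{equtile2}), and then to show that the contribution of the \emph{other} sheet $\Si_2$ to the square integral is at least $\pi-C\be\te$, because $\Si_2$ passes within distance $2\te\ro$ of $\xi_1$ without containing it; the boundary-term estimates you derived enter at that stage, applied to $\Si_2$ with centre $\xi_1$. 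In the flat-disc example this is exactly where the missing $\pi$ reappears. So the architecture of your proof needs to change, not just its last step; I would rework it along these lines or simply cite \cite{SiEX} as the paper does.
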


\begin{lemma}[Selection Principle, \cite{SiEX}]\label{selectionprinciple}
	If $\de>0$, if $I\con \R$ is a bounded interval and if $A_j\con I$ is a measurable set with measure $\ge \de$ for each $j=1,2,...$, then there exists a set $S\con I$ of measure $\ge\de$ such that each $x\in S$ lies in $A_j$ for infinitely many $j$.
\end{lemma}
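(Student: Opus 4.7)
The plan is to take $S$ to be the classical limsup set of the sequence, namely
\[
S := \limsup_{j \to \infty} A_j = \bigcap_{n=1}^\infty \bigcup_{j \ge n} A_j,
\]
so that by the definition of limsup of sets one has $x \in S$ if and only if $x$ belongs to $A_j$ for infinitely many indices $j$. This immediately secures the qualitative part of the conclusion, and $S$ is measurable as a countable intersection of countable unions of measurable sets, so the entire content of the statement reduces to proving the measure bound $\LL^1(S) \ge \de$.

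For the measure bound I would apply the reverse Fatou lemma to the sequence of indicator functions $f_j := \chi_{A_j}$. Because $I$ is a bounded interval, the constant dominating function $g := \chi_I$ is integrable and satisfies $0 \le f_j \le g$ for every $j$, so reverse Fatou applies and gives
\[
\LL^1(S) = \int_I \limsup_{j\to\infty} \chi_{A_j}\,d\LL^1 \;\ge\; \limsup_{j\to\infty} \int_I \chi_{A_j}\,d\LL^1 = \limsup_{j\to\infty} \LL^1(A_j) \;\ge\; \de,
\]
where the last inequality uses the standing hypothesis $\LL^1(A_j) \ge \de$ for every $j$.

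There is essentially no obstacle here: the only structural ingredients are the boundedness of $I$ (needed to supply an integrable dominating function so that reverse Fatou is available) and the uniform lower bound on the measures of the $A_j$. If one preferred to avoid invoking reverse Fatou, an equivalent route would be to use monotone convergence on the decreasing sequence $B_n := \bigcup_{j \ge n} A_j$: each $B_n$ has measure at least $\de$ since $B_n \supseteq A_n$, the sets $B_n$ decrease to $S$, and they all sit inside the bounded set $I$, so $\LL^1(S) = \lim_n \LL^1(B_n) \ge \de$. Either formulation closes the proof in a couple of lines.
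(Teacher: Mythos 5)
Your proof is correct and complete: the limsup set $S=\bigcap_n\bigcup_{j\ge n}A_j$ is exactly the set of points lying in infinitely many $A_j$, and the measure bound follows from continuity from above for the decreasing sets $B_n=\bigcup_{j\ge n}A_j\supseteq A_n$ (or equivalently reverse Fatou), where the boundedness of $I$ supplies the finiteness needed for either argument. The paper itself gives no proof, quoting the lemma from Simon's Lemma B.1, and your argument is precisely the standard one, so nothing further is needed.
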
	

\noindent Here we have the results leading to the decay estimate \eqref{stimapsi}. Results of this kind are standard, however usually stated under more general forms; since we needed only the following more simple decay estimates, we prove such inequalities here for the convenience of the reader.

\begin{lemma} \label{lemmadecad}
	Let $f:(0,x_0]\to[0,+\infty)$ such that $f(x_0)>0$, $f$ is non decreasing and:
	\begin{displaymath}
	f\bigg( \frac{x}{2}  \bigg)\le \ga f(x)
	\end{displaymath}
	for all $x\in(0,x_0]$ for some $\ga\in(0,1)$. Then there are $C>0,\be\in(0,1)$ such that:
	\begin{displaymath}
	f(x)\le C\bigg(\frac{x}{x_0} \bigg)^\be f(x_0)
	\end{displaymath}
	for all $x\in(0,x_0]$.
\end{lemma}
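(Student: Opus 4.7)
\textbf{Proof plan for Lemma \ref{lemmadecad}.} The statement is a standard dyadic iteration lemma, and the plan is to obtain the power-type decay by induction on the hypothesis $f(x/2)\le\gamma f(x)$, converting a geometric decay in $k$ into an algebraic decay in $x$ via logarithms.

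First I would iterate: by a trivial induction on $k\in\N$, for every $x\in(0,x_0]$ one has $f(x/2^k)\le \gamma^k f(x)$, and in particular $f(x_0/2^k)\le \gamma^k f(x_0)$. Given an arbitrary $x\in(0,x_0]$, I would pick the unique non-negative integer $k=\lfloor\log_2(x_0/x)\rfloor$, so that
\begin{equation*}
	\frac{x_0}{2^{k+1}}<x\le \frac{x_0}{2^k}.
\end{equation*}
Since $f$ is non-decreasing, this yields
\begin{equation*}
	f(x)\le f\!\left(\frac{x_0}{2^k}\right)\le \gamma^k f(x_0).
\end{equation*}

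Next I would convert $\gamma^k$ into a power of $x/x_0$. Set $\beta_0:=-\log_2\gamma>0$ (which is positive because $\gamma\in(0,1)$). Since $k\ge \log_2(x_0/x)-1$ and $\gamma<1$, monotonicity of $t\mapsto \gamma^t$ gives
\begin{equation*}
	\gamma^k\le \gamma^{\log_2(x_0/x)-1}=\gamma^{-1}\cdot 2^{\log_2\gamma\cdot\log_2(x_0/x)}=\gamma^{-1}\left(\frac{x}{x_0}\right)^{\beta_0}.
\end{equation*}
Combining with the previous step gives $f(x)\le \gamma^{-1}(x/x_0)^{\beta_0}f(x_0)$, which is the required estimate with constant $C=\gamma^{-1}$ and exponent $\beta_0$.

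Finally, to guarantee $\beta\in(0,1)$ as stated, I would set $\beta:=\min(\beta_0,1/2)$. Since $x/x_0\le 1$, the function $t\mapsto (x/x_0)^t$ is non-increasing, so $(x/x_0)^{\beta_0}\le (x/x_0)^{\beta}$, hence
\begin{equation*}
	f(x)\le \gamma^{-1}\left(\frac{x}{x_0}\right)^{\beta}f(x_0),
\end{equation*}
completing the proof with $C=\gamma^{-1}$ and $\beta\in(0,1)$. There is no serious obstacle here; the only small subtlety is the cosmetic one of ensuring $\beta<1$, which is handled by the last monotonicity remark. (I note that the hypothesis $f(x_0)>0$ is unused in the argument itself—it is stated only to exclude the degenerate case, and the inequality holds trivially when $f\equiv 0$.)
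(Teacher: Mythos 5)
Your proof is correct and takes essentially the same route as the paper: a dyadic iteration of the hypothesis, conversion of $\ga^k$ into the power $(x/x_0)^{-\log_2\ga}$, and the same cosmetic fix (using $x/x_0\le 1$) to force the exponent into $(0,1)$, with the same constant $C=\ga^{-1}$. The only difference — you iterate downward from $x_0$ and apply monotonicity at $x$, whereas the paper iterates upward from $x$ to a point $x'\in(x_0/2,x_0]$ and applies monotonicity there — is immaterial.
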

\begin{proof}
	For all $x\in(0,x_0/2]$ there is $n$ such that $2^nx:=x'\in(x_0/2,x_0]$. Then:
	\begin{displaymath}
	\begin{split}
	f(x)&\le \ga^n f(x')=2^{-n\log_2(1/\ga)}f(x')=\bigg( \frac{x}{x'} \bigg)^{\log_2(1/\ga)}f(x')\\&=\bigg( \frac{x}{x_0} \bigg)^{\log_2(1/\ga)}\bigg( \frac{x_0}{x'} \bigg)^{\log_2(1/\ga)}f(x')\le \frac{1}{\ga}\bigg( \frac{x}{x_0} \bigg)^{\log_2(1/\ga)} f(x_0).
	\end{split}
	\end{displaymath}
	For all $x\in(x_0/2,x_0]$:
	\begin{displaymath}
	f(x)=\frac{1}{\ga}\bigg(\frac{1}{2}  \bigg)^{\log_2(1/\ga)}f(x)\le \frac{1}{\ga}\bigg(\frac{x}{x_0}  \bigg)^{\log_2(1/\ga)}f(x_0).
	\end{displaymath}
	Now if $\log_2(1/\ga)<1$ we are done, otherwise, since $x/x_0\le1$ for all $x$, we can choose an arbitrary $\be\in(0,1)$ and we have $(x/x_0)^{\log_2(1/\ga)}\le (x/x_0)^\be$.
\end{proof}

\begin{cor}\label{cordecad}
	Let $f:(0,x_0]\to[0,+\infty)$ such that $f(x_0)>0$, $f$ is non decreasing and:
	\begin{displaymath}
	f\bigg( \frac{x}{2}  \bigg)\le \ga f(x)+\al x^2
	\end{displaymath}
	for all $x\in(0,x_0]$ for some $\ga\in(1/2,1),\al\in(0,1/8)$. Then there are $C>0,\be\in(0,1)$ such that:
	\begin{displaymath}
	f(x)\le C\bigg(\frac{x}{x_0} \bigg)^\be f(x_0)
	\end{displaymath}
	for all $x\in(0,x_0]$.
\end{cor}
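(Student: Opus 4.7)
The plan is to reduce the inhomogeneous decay estimate for $f$ to the homogeneous one already handled by Lemma \ref{lemmadecad}. Since the perturbation $\alpha x^2$ has the convenient scaling $(x/2)^2 = x^2/4$, I would introduce an auxiliary function
\[
g(x) := f(x) + A x^2, \qquad x \in (0, x_0],
\]
where the constant $A > 0$ is to be chosen so as to absorb the error term. Being a sum of non-negative non-decreasing functions, $g$ is automatically non-decreasing, and $g(x_0) \ge f(x_0) > 0$.

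Plugging the hypothesis into the definition of $g$ yields
\[
g(x/2) \;=\; f(x/2) + A x^2/4 \;\le\; \gamma f(x) + \bigl(\alpha + A/4\bigr) x^2 \;=\; \gamma\, g(x) + \bigl(\alpha + A/4 - \gamma A\bigr) x^2.
\]
The coefficient in front of $x^2$ becomes non-positive as soon as $A \ge \alpha/(\gamma - 1/4)$, which is meaningful precisely because the hypothesis $\gamma > 1/2$ forces $\gamma - 1/4 > 1/4 > 0$. Fixing such an $A$, I would obtain $g(x/2) \le \gamma\, g(x)$ for every $x \in (0, x_0]$, so that $g$ falls into the framework of Lemma \ref{lemmadecad} with the same $\gamma \in (0,1)$.

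Applying Lemma \ref{lemmadecad} to $g$ then provides constants $C' > 0$ and $\beta \in (0,1)$ with $g(x) \le C' (x/x_0)^\beta g(x_0)$ for all $x \in (0, x_0]$. Since $f \le g$ by construction, and $g(x_0) = \bigl(1 + A x_0^2/f(x_0)\bigr) f(x_0)$, the conclusion follows with the choice $C := C'\bigl(1 + A x_0^2/f(x_0)\bigr)$. I do not anticipate a real obstacle in this argument: the only verification needed is that the given ranges $\gamma \in (1/2, 1)$ and $\alpha \in (0, 1/8)$ permit the choice of $A$ above, and this only uses $\gamma > 1/2$; the upper bound on $\alpha$ appears to be comfortable slack inherited from the regularity proof rather than an essential constraint for the reduction itself.
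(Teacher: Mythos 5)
Your argument is correct and follows essentially the same route as the paper: the paper also absorbs the $\al x^2$ term by passing to $h(x)=f(x)+x^2$ (i.e.\ your $A=1$, which works since $\al+\tfrac14<\tfrac38<\ga$) and then applies Lemma \ref{lemmadecad}, concluding with $x_0^2\le a f(x_0)$ exactly as you do. Your only variation is leaving $A$ as a free parameter, which, as you note, shows the hypothesis $\al<1/8$ is not essential for the reduction.
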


\begin{proof}
	Let $h(x)=f(x)+x^2$. We have:
	\begin{displaymath}
	h\bigg(\frac{x}{2} \bigg)=f\bigg(\frac{x}{2} \bigg)+\frac{x^2}{4}\le \ga f(x)+\bigg(\al+\frac{1}{4} \bigg)x^2\le \ga h(x).
	\end{displaymath}
	Applying Lemma \ref{lemmadecad} and taking $a>0$ such that $x_0^2\le af(x_0)$ we obtain:
	\begin{displaymath}
	f(x)\le h(x)\le K\bigg(\frac{x}{x_0}\bigg)^\be h(x_0)= K\bigg(\frac{x}{x_0}\bigg)^\be (f(x_0)+x_0^2)\le C\bigg(\frac{x}{x_0} \bigg)^\be f(x_0),
	\end{displaymath}
	with $C=K(1+a)$.
\end{proof}

%

\begin{lemma}[Elliptic Regularity, \cite{SiEX}] \label{elliptic}
	Let $\be,\ga,L>0$, $B^2=\{ x\in\R^2:|x|<1 \}$ and let
	\begin{displaymath}
	u=(u^1,...,u^m)\in W^{2,2}(B^2;\R^m)\cap C^{1,\ga}(B^2;\R^m)
	\end{displaymath}
	be such that $|u|+|Du|\le 1$ and:
	\begin{displaymath}
	\int_{ B^2\cap \{x:|x-\xi|<\ro \} } |D^2u|^2\le \be\ro^{2\ga}
	\end{displaymath}
	for each $\xi\in B^2$ and $\ro<1$. Moreover suppose that $u$ is a weak solution of the system:
	\begin{displaymath}
	D_jD_s(A^{ijrs}_{\al\be}(x,u,Du)D_iD_ru^\be)+D_jB^j_\al(x,u,Du,D^2u)+B^0_\al(x,u,Du,D^2u)=0
	\end{displaymath}
	where $A^{ijrs}_{\al\be}=A^{ijrs}_{\al\be}(x,z,p)$ and $B^j_\al=B^j_\al(x,z,p,q)$ satisfy:
	\begin{displaymath}
	\begin{split}
	\sum_{i,j,r,s,\al,\be} A^{ijrs}_{\al\be}\xi^\al_{ij}\xi^\be_{rs}&\ge L^{-1}\sum_{i,j,\al} |\xi^\al_{ij}|^2,\\
	|A^{ijrs}_{\al\be}(x,z,p)|\le L,\qquad& |D_{(x,z,p)}A^{ijrs}_{\al\be}(x,z,p)|\le L,\\
	|B^j_\al(x,z,p,q)|+|D_{(x,z,p)}&B^j_\al(x,z,p,q)|\le L(1+|q|^2),\\
	|D_q B^j_\al(x,z,p,q)|&\le L(1+|q|),
	\end{split}
	\end{displaymath}
	for all $|z|+|p|\le 1$ where $D_P F$ means the tensor of all first derivatives with respect to the variables $P$.\\
	Then $u\in W^{3,2}_{loc}(B^2)\cap C^{2,\al}$.
\end{lemma}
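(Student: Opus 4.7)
The plan is the classical two-step route for quasilinear fourth-order elliptic systems: first enhance $u$ from $W^{2,2}$ to $W^{3,2}_{loc}$ via Nirenberg's difference-quotient method, and then upgrade to $C^{2,\al}_{loc}$ by a Campanato iteration built on coefficient freezing. Throughout, the bounds $|u|+|Du|\le 1$ and the Morrey-type gauge $\int_{B_\ro(\xi)}|D^2u|^2\le\be\ro^{2\ga}$ supply the quantitative smallness needed to close the estimates.

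\textbf{Difference quotients.} Fix $B_r\subset\subset B^2$, a cutoff $\eta\in C^\infty_c(B_r)$ and, for small $|h|$ and a direction $e_k$, set $\De^h_k w:=(w(\cdot+he_k)-w(\cdot))/h$. Testing the weak form against $\vp=\De^{-h}_k(\eta^4\De^h_k u)$ and distributing the difference quotients, the principal term produces (after the coercivity assumption on $A^{ijrs}$ is applied as a Gårding-type inequality) a lower bound
\[
L^{-1}\int\eta^4|D^2\De^h_k u|^2 \le C\int|\De^h_k u|^2|D\eta|^2+\mbox{(commutator)}+\mbox{(lower order)}.
\]
The commutators from $\De^h_k$ hitting $A^{ijrs}(x,u,Du)$ are controlled by the Lipschitz bound $|D_{(x,z,p)}A^{ijrs}|\le L$ together with $|u|+|Du|\le 1$, and the contributions from $B^j,B^0$ are dominated using $|B|\le L(1+|D^2u|^2)$ combined with the Morrey bound on $D^2u$. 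Sending $h\to 0$ and summing over $k$ yields $D^3u\in L^2_{loc}(B^2)$.

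\textbf{Campanato iteration.} Fix $\xi\in B^2_{1/2}$ and small $\ro>0$, and let $v$ solve the frozen linear system $D_jD_s(A^{ijrs}(\xi,u(\xi),Du(\xi))D_iD_rv^\be)=0$ in $B_\ro(\xi)$ with $v-u\in W^{2,2}_0(B_\ro(\xi))$. Because $v$ is analytic, standard interior estimates for homogeneous constant-coefficient elliptic systems give, for any $\te\in(0,1)$,
\[
\int_{B_{\te\ro}(\xi)}|D^2v-(D^2v)_{\te\ro}|^2\le C\te^4\int_{B_\ro(\xi)}|D^2v-(D^2v)_\ro|^2.
\]
Setting $w=u-v\in W^{2,2}_0(B_\ro(\xi))$, testing the equation for $w$ against $w$ itself and again invoking coercivity gives
\[
\int_{B_\ro(\xi)}|D^2w|^2\le C\,\mbox{(freezing error)}+C\,\mbox{(lower-order error)},
\]
where the freezing error is bounded via the Lipschitz dependence of $A^{ijrs}$ on $(x,z,p)$ together with the $C^{1,\ga}$ regularity of $u$, and the lower-order error is bounded via the quadratic growth of $B^j,B^0$ absorbed by the Morrey estimate on $D^2u$. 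Each is $O(\ro^{2+2\de})$ for some $\de>0$. Writing $\psi(\ro):=\int_{B_\ro(\xi)}|D^2u-(D^2u)_\ro|^2$, the triangle inequality produces the decay
\[
\psi(\te\ro)\le C\te^4\psi(\ro)+C\ro^{2+2\de}
\]
uniformly in $\xi$. Choosing $\te$ small and iterating (Campanato's lemma) gives $\psi(\ro)\le C\ro^{2+2\al}$ for any $\al<\min(\de,1)$, and Campanato's characterisation of Hölder spaces in $\R^2$ then yields $D^2u\in C^{0,\al}_{loc}$, i.e.\ $u\in C^{2,\al}_{loc}$.

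\textbf{Main obstacle.} The delicate point throughout is the quadratic growth $|B^j|+|B^0|\le L(1+|D^2u|^2)$: a bare $L^2$ control of $D^2u$ is scale-critical and would prevent either step from closing. The Morrey hypothesis $\int_{B_\ro(\xi)}|D^2u|^2\le\be\ro^{2\ga}$ supplies exactly the extra factor $\ro^{2\ga}$ which turns the quadratic nonlinearity into a genuine perturbation at every scale and keeps both the commutator terms in the difference-quotient step and the lower-order errors in the Campanato step subcritical. Threading this gain through the cutoff and freezing bookkeeping, without losing the sharp exponent, is the one technical part; once it is in place, the two steps above are routine and combine to give $u\in W^{3,2}_{loc}\cap C^{2,\al}_{loc}$.
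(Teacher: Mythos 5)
The paper does not prove this lemma at all: it is stated in the Appendix purely as an imported result, quoted verbatim from \cite{SiEX} (Lemma 3.2), so there is no in-paper argument to compare yours against. Your two-step outline --- difference quotients to reach $W^{3,2}_{loc}$, then coefficient freezing plus a Campanato iteration to reach $C^{2,\al}$ --- is the standard architecture for this kind of quasilinear fourth-order system and is consistent with the method of the cited source, and you correctly identify that the Morrey hypothesis $\int_{B_\ro(\xi)}|D^2u|^2\le\be\ro^{2\ga}$ is what tames the scale-critical quadratic growth of $B^j,B^0$ in $q=D^2u$.

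Two points deserve more care before this could be called a proof. First, in the Campanato step your claim that the freezing and lower-order errors are $O(\ro^{2+2\de})$ in a single pass is not justified for small $\ga$: the freezing error is of size $\ro^{2\ga}\int_{B_\ro}|D^2u|^2\le C\ro^{4\ga}$, which beats $\ro^2$ only when $\ga>1/2$. The standard remedy is to run the decay argument iteratively, each pass improving the Morrey exponent of $D^2u$ (roughly from $2\ga$ to $\min(4\ga,4-\ep)$), so that after finitely many iterations one crosses the threshold $2$ and Campanato's characterisation applies; as written, your iteration would stall below the Hölder range when $\ga\le 1/2$. Second, in the difference-quotient step the commutator term $(\De^h_k A)\,D^2u$ contains the factor $|\De^h_k Du|\,|D^2u|$, a product of two functions controlled only in $L^2$; closing the estimate requires an interpolation (or covering) argument that converts the Morrey decay into an $L^4$-type bound with a small constant, so that this product can be absorbed into the coercive term. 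You name this as ``the one technical part'' but do not execute it, and it is precisely where the hypothesis $|D_qB^j|\le L(1+|q|)$ and the decay $\be\ro^{2\ga}$ must be threaded through quantitatively. Neither issue changes the strategy, but both must be filled in for the argument to close.
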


\noindent Finally, we list some facts about theory of varifolds that we used in the work.

\begin{thm}[Compactness of Varifolds, \cite{Al} and \cite{SiGMT}] \label{compactnessofvarifolds}
	Let $V_n=\bv(M_n,\te_n)$ be a sequence of 2-rectifiable varifolds in $U\con\R^3$ open such that:
	\begin{displaymath}
	(1) \qquad\sup_n \mu_{V_n}(W)+||\de V_n||(W) <+\infty \quad \forall W\con\con U,
	\end{displaymath}
	\begin{displaymath}
	(2) \qquad \exists \Te(V_n,x)\ge 1 \mbox{ on }U\sm A_n \, :\, \mu_{V_n}(A_n\cap W)\to0\quad \forall W\con\con U,
	\end{displaymath}
	where $\mu_V$ denotes the Radon measure on $U$ induced by a varifold $V$ and $\de V$ is its first variation, and where $\Te(V,x):=\lim_{r\searrow0}\frac{\mu_V(B_r(x))}{\pi r^2}$. Then there exists a subsequence $V_{n_k}$ converging to a rectifiable varifold $V$ with locally bounded first variation with the properties that:
	\begin{displaymath}
	\exists \Te(\mu_V,x)\ge 1 \quad \mu_V\mbox{-}ae \mbox{ in }U,
	\end{displaymath}
	\begin{displaymath}
	\liminf_n ||\de V_n||(W)\ge ||\de V||(W) \quad \forall W\con\con U.
	\end{displaymath}
	Moreover if each $V_{n_k}$ is integer, then $V$ is integer too.
\end{thm}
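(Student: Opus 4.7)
The plan is to combine weak-$*$ compactness of Radon measures on the Grassmannian bundle $G_2(U)$ with standard semicontinuity of the first variation, and then invoke Allard's rectifiability and integer-rectifiability theorems to upgrade the abstract limit measure to a rectifiable (respectively integer rectifiable) varifold.

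First I would realise each $V_n$ as a Radon measure on $G_2(U)$. Hypothesis (1) supplies uniform mass bounds on $G_2(W)$ for any $W\con\con U$, so by Banach--Alaoglu together with a diagonal argument over an exhaustion $W_1\con\con W_2\con\con\ldots$ of $U$, I can extract a subsequence (still denoted $V_n$) converging in the weak-$*$ sense to a Radon measure $V$ on $G_2(U)$; by definition this is varifold convergence in $U$, and the induced weight measures satisfy $\mu_{V_n}\to\mu_V$ weakly on $U$ with $\mu_V(W)\le\liminf_n\mu_{V_n}(W)<\infty$ for every $W\con\con U$.

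Next I would address the first variation. For any $X\in C^1_c(U;\R^3)$ the function $(x,S)\mapsto\mathrm{div}_S X(x)$ is continuous with compact support on $G_2(U)$, so varifold convergence yields $\de V_n(X)\to\de V(X)$. Taking suprema over $X$ with $|X|\le 1$ and $\supp X\con W$ produces the semicontinuity statement $\|\de V\|(W)\le\liminf_n\|\de V_n\|(W)$ for every $W\con\con U$; combined with (1) this already shows $V$ has locally bounded first variation. The third preparatory step is to promote the density lower bound: for each $V_n$ the monotonicity formula (which is available precisely because $\|\de V_n\|$ is locally bounded) yields a uniform positive lower bound for $\mu_{V_n}(B_\ro(x))/\pi\ro^2$ at every point of $\supp\mu_{V_n}\sm A_n$, with correction terms controlled by $\|\de V_n\|$. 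Passing this inequality to the limit via weak convergence of $\mu_{V_n}$ and using $\mu_{V_n}(A_n\cap W)\to 0$ to discard the bad sets, I would obtain the analogous monotonicity inequality for $V$ and deduce $\Te(\mu_V,x)\ge 1$ for $\mu_V$-a.e.\ $x\in U$.

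At this point rectifiability of $V$ follows by direct appeal to Allard's rectifiability theorem in \cite{SiGMT}: any Radon-measure-valued varifold on $U$ with locally bounded first variation and positive lower density bound $\mu_V$-a.e.\ is automatically 2-rectifiable. For the integer case I would invoke the companion closure theorem for integer rectifiable varifolds (also \cite{SiGMT}), which states that integrality with locally bounded first variation is preserved under varifold convergence. The conceptually hardest step, and the one I would not attempt to reprove, is this last rectifiability/integrality step: Allard's structural theorems are genuinely deep, whereas the extraction, the semicontinuity of $\|\de\cdot\|$ and the monotonicity-based transfer of the density lower bound carried out in the first three steps are routine once the weak-$*$ setting is in place.
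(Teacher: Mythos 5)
The paper does not prove this statement at all: it is quoted verbatim as a known result of Allard, with the proof deferred entirely to the citation \cite{SiGMT} (it sits in the Appendix precisely as a black box used in Theorem \ref{thmexistenceforLa<LO} and Theorem \ref{thmregularity}). Your outline is therefore not comparable to anything in the paper, but it is a faithful summary of the standard proof in the cited source: weak-$*$ extraction on the Grassmannian bundle, continuity of $\de V_n(X)\to\de V(X)$ for fixed $C^1_c$ fields giving lower semicontinuity of $\|\de\cdot\|$, transfer of the density lower bound via the monotonicity formula, and then Allard's rectifiability and integral-compactness theorems for the structural conclusions. Two remarks on calibration. First, the density-transfer step is less ``routine'' than you suggest: weak convergence of $\mu_{V_n}$ only controls masses of closed balls from above and open balls from below in the wrong combination pointwise, so one needs the monotonicity formula with the $\|\de V_n\|$-correction to make the ratio $\mu_{V_n}(B_\ro(x))/\pi\ro^2$ almost monotone uniformly in $n$, together with a covering or contradiction argument to conclude $\Te(\mu_V,x)\ge1$ for $\mu_V$-a.e.\ $x$; this is where hypothesis (2) with the vanishing bad sets $A_n$ is actually consumed. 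Second, the ``companion closure theorem'' for integrality that you invoke in the last step \emph{is} Allard's integral compactness theorem, i.e.\ it is not a separate elementary closure property but the deepest part of the whole statement; you correctly decline to reprove it, but it should be flagged as carrying the same weight as the rectifiability step rather than as an afterthought. With those caveats your proposal is a correct reconstruction of the argument the paper delegates to the literature.
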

\begin{remark}
	It is very important to observe that if in Theorem \ref{compactnessofvarifolds} the varifolds $V_n$ are integer, then the hypothesis (2) is automatically satisfied (with sets such that $\mu_{V_n}(A_n)=0$).
\end{remark}
\noindent Also, we remind the concept of $\bF$-metric (\cite{PiEX}, page 66) used in the proof of Theorem \ref{thmbehaviour}, defined as follows.
\begin{defn}
	The $\bF$\emph{-metric} on $\VV_2(U)$, that is the set of 2-rectifiable integer varifolds with support contained in the open $U\con\R^3$, is defined as:
	\begin{equation}
	\bF(V,W)=\sup\{V(f)-W(f):f\in C_c(G_n(\R^{n+k})) ,|f|\le1,Lip(f)\le1  \}.
	\end{equation}
\end{defn}
\noindent And we have the useful:
\begin{lemma}[\cite{PiEX}, page 66]\label{Fmetricvarifold}
	In sets $\VV_2(U)\cap\{V:\bM(V)\le C<+\infty\}$ with $U\con\R^{n+k}$ open, the convergence of varifolds is equivalent to the convergence in the $\bF$-metric.
\end{lemma}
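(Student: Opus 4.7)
The plan is to establish the two implications separately. The easy direction, $\bF$-convergence implies varifold convergence, proceeds by uniform approximation. Given any test function $f\in C_c(G_n(\R^{n+k}))$, approximate $f$ uniformly (e.g.\ by mollification) by Lipschitz functions $f_\eta$ supported in a fixed compact set, with $\|f-f_\eta\|_\infty\to 0$. Writing $f_\eta=c_\eta g_\eta$ with $g_\eta$ admissible in the definition of $\bF$ (i.e.\ $|g_\eta|\le 1$ and $Lip(g_\eta)\le 1$), split
\[
|V_j(f)-V(f)|\le |V_j(f-f_\eta)|+|V_j(f_\eta)-V(f_\eta)|+|V(f_\eta-f)|.
\]
The outer terms are bounded by $C\|f-f_\eta\|_\infty$ thanks to the mass bound $\bM(V_j),\bM(V)\le C$, while the middle term is at most $c_\eta\bF(V_j,V)\to 0$. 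Sending $j\to\infty$ then $\eta\to 0$ yields $V_j(f)\to V(f)$.

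For the converse, I argue by contradiction: assume $V_j\to V$ as varifolds but $\bF(V_j,V)\not\to 0$. Extracting a subsequence, obtain $\ep_0>0$ and $f_j\in C_c(G_n(\R^{n+k}))$ with $|f_j|\le 1$, $Lip(f_j)\le 1$, and $|V_j(f_j)-V(f_j)|\ge\ep_0$. The first step is to confine these test functions to a fixed compact set. By inner regularity of $\mu_V$, pick a compact $K\con U$ with $\mu_V(U\sm K)<\ep_0/8$ and a $1$-Lipschitz cutoff $\chi\in C_c(U)$ with $\chi\equiv 1$ on $K$, $0\le\chi\le 1$, and $\supp\chi=:K'\con U$ compact. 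Applying varifold convergence to the fixed Lipschitz, compactly supported test $1-\chi$ gives $\mu_{V_j}(U\sm\{\chi=1\})\le V_j(1-\chi)\to V(1-\chi)<\ep_0/8$, so for $j$ large
\[
|V_j(f_j)-V_j(f_j\chi)|+|V(f_j)-V(f_j\chi)|<\ep_0/2,
\]
and hence $|V_j(f_j\chi)-V(f_j\chi)|\ge\ep_0/2$.

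The second step is a standard Arzel\`{a}--Ascoli compactness argument: the family $\{f_j\chi\}$ is uniformly bounded by $1$, supported in the fixed compact set $K'$, and has Lipschitz constant at most $1+Lip(\chi)$. Hence it is precompact in $C(G_n(K'))$; extract a uniformly convergent subsequence $f_j\chi\to g$. Decomposing
\[
V_j(f_j\chi)-V(f_j\chi)=[V_j(f_j\chi)-V_j(g)]+[V_j(g)-V(g)]+[V(g)-V(f_j\chi)],
\]
the outer differences are bounded by $C\|f_j\chi-g\|_\infty\to 0$ via the mass bound, and the middle term vanishes by varifold convergence against the fixed test function $g\in C_c(G_n(K'))$. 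This contradicts the lower bound $\ep_0/2$ and completes the proof.

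The main obstacle is the tightness step: because $U$ need not be relatively compact and no uniform bound on $\supp V_j$ is assumed, one must rule out escape of mass along the sequence. The crucial input is that the mass bound $\bM(V_j)\le C$, combined with the single scalar convergence $V_j(1-\chi)\to V(1-\chi)$ against a fixed compactly supported Lipschitz cutoff $\chi$, gives uniform control of $\mu_{V_j}$ outside $\{\chi=1\}$; once this confinement is in place, the remainder is a routine equicontinuous-family argument.
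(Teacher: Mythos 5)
The paper does not actually prove this lemma --- it is quoted from \cite{PiEX} (p.~66) without proof --- so there is no internal argument to compare with; I will assess your proof on its own. Your first implication ($\bF$-convergence implies varifold convergence) is correct: the mass bound absorbs the error from the uniform Lipschitz approximation and the middle term is controlled by $\bF(V_j,V)$, exactly as you say.

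The converse direction has a genuine gap, located precisely at the point you yourself flag as ``the main obstacle''. The function $1-\chi$ is \emph{not} compactly supported --- it is identically $1$ outside $\supp\chi$ --- so varifold convergence, which tests only against $C_c(G_n(U))$, does not give $V_j(1-\chi)\to V(1-\chi)$. That convergence is equivalent to $\bM(V_j)\to\bM(V)$, i.e.\ to the very tightness you are trying to establish, and a uniform mass bound does not yield it: weak-$*$ convergence only gives $\bM(V)\le\liminf_j\bM(V_j)$. Indeed, for a general open $U$ the implication is false: take $U=\R^{n+k}$ and let $V_j$ be the unit $n$-disc centred at $(j,0,\dots,0)$; then $\bM(V_j)\equiv\pi$ and $V_j\to0$ as varifolds, but testing against a $1$-Lipschitz bump equal to $1$ on that disc gives $\bF(V_j,0)\ge\pi$ for every $j$. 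So no argument can close this step without an additional hypothesis. The statement (and your Arzel\`a--Ascoli argument, which is otherwise fine) does hold under the assumption in force wherever the paper invokes the lemma, namely that all the varifolds have support in a fixed compact set $K_0\con\con U$ (here $\bar{\Om}\con\con\Om_\ep$): then one chooses $\chi\in C_c(U)$ with $\chi\equiv1$ near $K_0$, so $V_j(1-\chi)=0$ identically and the confinement step is trivial. You should either add that hypothesis explicitly or observe that it is built into the definition of $\VV_2(\bar{\Om})$.
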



\end{document}